\newtheoremstyle{mio}%
	{}{} 
	{\itshape}{} 
	{\bfseries}{.}{ } 
	{#1 #2\thmnote{\mdseries~(\scshape #3)}} 
\theoremstyle{mio}
\newtheorem{teor}{Theorem}[section]
\newtheorem{cor}[teor]{Corollary}
\newtheorem{prop}[teor]{Proposition}
\newtheorem{lemma}[teor]{Lemma}
\newtheorem{defin}[teor]{Definition}
\newtheoremstyle{definition2}%
	{}{} 
	{}{} 
	{\bfseries}{.}{ } 
	{#1 #2\thmnote{\mdseries~ #3}} 
\theoremstyle{definition2}
\newtheorem{oss}[teor]{Remark}
\newtheorem{ex}[teor]{Example}
\newcommand{\qspec}[1]{\mathrm{QSpec}^{#1}}
\newcommand{\psspec}[1]{\mathrm{PsSpec}^{#1}}
\newcommand{\norm}[1]{\widehat{#1}}
\newcommand{\inssubmod}{\mathbf{F}}
\newcommand{\inssemistar}{\mathrm{SStar}}
\newcommand{\inssmstar}{\mathrm{(S)Star}}
\newcommand{\Min}{\mathrm{Min}}
\newcommand{\inssemistable}{\inssemistar_{st}}
\newcommand{\inssmstable}{\inssmstar_{st}}
\newcommand{\inssemisv}{\inssemistar_{sv}}
\newcommand{\xxcal}{\mathscr{X}}
\newcommand{\power}{\mathcal{P}}
\newcommand{\insfracid}{\mathcal{F}}
\DeclareMathOperator{\Spec}{Spec}
\DeclareMathOperator{\Max}{Max}
\DeclareMathOperator{\rad}{rad}
\newcommand{\ins}[1]{\mathbb{#1}}
\newcommand{\insQ}{\ins{Q}}
\newcommand{\Zar}{\mathrm{Zar}}
\title[Stable semistar operations on a Pr\"ufer domain]{Towards a classification of stable semistar operations on a Pr\"ufer domain}
\author{Dario Spirito}
\address{Dipartimento di Matematica e Fisica, Universit\`a degli Studi ``Roma Tre'', Roma, Italy}
\email{spirito@mat.uniroma3.it}
\keywords{Semistar operations, stable operations, Pr\"ufer domains, $v$-operation}
\subjclass[2010]{13A15, 13A18, 13F05, 13F30, 13G05}
\begin{document}
\begin{abstract}
We study stable semistar operations defined over a Pr\"ufer domain, showing that, if every ideal of a Pr\"ufer domain $R$ has only finitely many minimal primes, every such closure can be described through semistar operations defined on valuation overrings of $R$.
\end{abstract}

\maketitle

\section{Introduction}
Semistar operations were defined and studied by Okabe and Matsuda in \cite{okabe-matsuda} as a more flexible version of the classical notion of star operation, first introduced by Krull \cite{krull_breitage_I-II} and Gilmer \cite[Chapter 32]{gilmer}. Several distinguished classes of star and semistar operations have been investigated: among these, we can cite \emph{finite-type} operations, \emph{spectral} operations (linked to the spectrum of the ring; see e.g. \cite{anderson_two_2000,anderson_intersections_2005,localizing-semistar})  and \emph{eab} operations (linked with the valuation overrings of the ring; cfr., for example, \cite{fontana_loper-eab} and \cite[Section 4]{fifolo_transactions}).

The aim of the present paper is to study and classify \emph{stable} semistar operations, that is, semistar operations that distribute over finite intersections. This class of closure operations is closely linked with spectral operations and, indeed, the two concepts are often introduced together \cite{anderson_two_2000,anderson_intersections_2005}. However, while spectral operations, due to their definition, can be classified in a simple way by studying subsets of the spectrum of the ring (\cite[Remark 4.5]{localizing-semistar} and \cite[Corollary 4.4]{topological-cons}), stable operations require more work, and their classification isn't nearly as clear as the classification of spectral operations. We prove that, if $R$ is a Pr\"ufer domain such that every ideal has only finitely many minimal primes, then stable operations have a standard representation (Corollary \ref{cor:MinIfin-glob}), but that this result isn't general enough to cover all cases (Example \ref{ex:AD}); moreover, we show that, if $R$ is a Pr\"ufer domain with Noetherian spectrum, a stable operation $\star$ such that $R=R^\star$ is uniquely determined by a subset of the set $\mathcal{M}$ of non-divisorial maximal ideals, and that the set of this closures is order-isomorphic to the power set of $\mathcal{M}$ (Proposition \ref{prop:noeth-max}).

\section{Preliminaries}\label{sect:prel}
Let $R$ be an integral domain with quotient field $K$, let $\inssubmod(R)$ be the set of $R$-submodules of $K$ and let $\insfracid(R)$ be the set of fractional ideals of $R$ (where a $R$-submodule $I$ of $K$ is a \emph{fractional ideal} of $R$ if $dI\subseteq K$ for some $d\in K\setminus\{0\}$). A \emph{semistar operation} on $R$ is a map $\star:\inssubmod(R)\longrightarrow\inssubmod(R)$, $I\mapsto I^\star$, such that, for every $I,J\in\inssubmod(R)$, $x\in K$,
\begin{enumerate}
\item $I\subseteq I^\star$;
\item $I^\star=(I^\star)^\star$;
\item if $I\subseteq J$, then $I^\star\subseteq J^\star$;
\item $(xI)^\star=x\cdot I^\star$.
\end{enumerate}
If $R=R^\star$, then $\star$ is said to be a \emph{(semi)star operation}; if $\star$ is (semi)star, then $\star|_{\insfracid(R)}$ is said to be a \emph{star operation} (this is equivalent to the usual definition, which does not uses semistar operations; see \cite[Chapter 32]{gilmer}). A submodule $I$ such that $I=I^\star$ is said to be \emph{$\star$-closed}, while an integral ideal $I$ such that $I=I^\star\cap R$ is said to be \emph{quasi-$\star$-closed}. The set of $\star$-closed submodules, and the set of quasi-$\star$-closed ideals, are closed by arbitrary intersections.

We will be using the following property of star and semistar operation: if $I$ is $\star$-closed, then so is $(I:J):=\{x\in K\mid xJ\subseteq I\}$, for every submodule $J$ of the quotient field $K$.

The set of semistar (respectively, star) operations has an order, defined by $\star_1\leq\star_2$ if and only if $I^{\star_1}\subseteq I^{\star_2}$ for all submodules (resp., fractional ideals) $I$. Every family $\Delta$ of semistar (or (semi)star, or star) operations has an infimum, given by the map $I\mapsto\bigcap_{\star\in\Delta}I^\star$; it also has a supremum, and a submodule is closed by $\sup\Delta$ if and only it is $\star$-closed for every $\star\in\Delta$. Among star operations, the biggest is the \emph{$v$-operation}, defined as $v:I\mapsto(R:(R:I))$. Ideals that are closed by the $v$-operations are said to be \emph{divisorial}.

A semistar (or (semi)star, or star) operation is said to be \emph{stable} if $(I\cap J)^\star=I^\star\cap J^\star$ for every $I,J\in\inssubmod(R)$. If $\star$ is a (semi)star stable operation, then $\star|_{\insfracid(R)}$ is a stable star operation; moreover, there is a one-to-one correspondence between stable star operations and stable (semi)star operations (see \cite[Discussion after Proposition 3.10]{surveygraz} or \cite[Proposition 3.4]{spettrali-eab}). Two stable semistar operations $\star_1,\star_2$ on $R$ are equal if and only if, for every proper ideal $I$ of $R$, $I^{\star_1}=R^{\star_1}$ is equivalent to $I^{\star_2}=R^{\star_2}$ (see \cite[Theorem 2.6]{anderson_two_2000} and \cite[p.182]{localizing-semistar}), and thus if and only if, for every integral ideal $I$, $1\in I^{\star_1}$ is equivalent to $1\in I^{\star_2}$.

If $\star$ is a stable semistar operation of $R$, $I$ and $J$ are $R$-modules, and $J=j_1R+\cdots+j_nR$ is finitely generated, then $(I:J)^\star=(I^\star:J^\star)=(I^\star:J)$, because $(I:J)=j_1^{-1}I\cap\cdots\cap j_n^{-1}I$.

We denote by $\inssemistable(R)$ the set of stable semistar operations on $R$.

Let now $V$ be a valuation domain with quotient field $K$. Then, there are at most two star operations on $V$: the identity (denoted by $d$) and the $v$-operation. Moreover, $d\neq v$ if and only if the maximal ideal $M$ of $V$ is not principal, in which case the only fractional ideals $I$ such that $I\neq I^v$ are those of the form $xM$, as $x$ ranges in $K\setminus\{0\}$ \cite[Chapter 31, Exercise 12]{gilmer}. In particular, if $xM\subseteq I$, then $x\in I^v$.

Both the identity and the $v$-operation are stable since, if $I,J\in\insfracid(V)$, then (without loss of generality) $I\subseteq J$, and thus $(I\cap J)^v=I^v=I^v\cap J^v$.

\section{The two spectra}
Let $\star$ be a stable semistar operation on the domain $R$. The $R$-module $D:=R^\star$ is still a ring, and the closure of every $D$-module is again a $D$-module; it follows that $\star|_{\inssubmod(D)}$ is again a semistar operation, and clearly it is stable. It follows that there is no loss of generality in supposing that $R=R^\star$, i.e., that $\star$ is a (semi)star operation.

We start by studying the action of $\star$ on prime and primary ideals.
\begin{lemma}\label{lemma:spec}
Let $\star$ be a stable (semi)star operation on $R$, and let $P\in\Spec(R)$. Then, $P^\star\in\{P,R\}$.
\end{lemma}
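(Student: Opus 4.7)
The plan is to show that $P^\star$ is forced to equal either $P$ or $R$ by combining stability with the primeness of $P$. First I would note that since $P\subseteq R$ and $R^\star=R$, monotonicity gives $P^\star\subseteq R$, so we are working with an integral ideal.

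The key step is to evaluate $(P\cap xR)^\star$ in two different ways, for an arbitrary $x\in R\setminus P$. On the one hand, primeness of $P$ together with $x\notin P$ forces $P\cap xR=xP$, so by property (4) we get $(P\cap xR)^\star=(xP)^\star=xP^\star$. On the other hand, stability yields $(P\cap xR)^\star=P^\star\cap(xR)^\star=P^\star\cap xR$, where the last equality uses $(xR)^\star=xR^\star=xR$. Comparing these two expressions gives
\[
xP^\star=P^\star\cap xR \qquad \text{for every } x\in R\setminus P.
\]
Translated back into elements, this says: if $x\in R\setminus P$ and $r\in R$ satisfy $xr\in P^\star$, then $r\in P^\star$.

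To conclude, suppose $P^\star\neq P$. By extensivity $P\subseteq P^\star$, so strict inequality means we can pick some $x\in P^\star\setminus P$. Apply the implication above to this $x$ with $r=1$: from $x\cdot 1=x\in P^\star$ and $x\notin P$ we conclude $1\in P^\star$, whence $P^\star=R$. Thus whenever $P^\star\neq R$ we must already have $P^\star=P$, which is the claim.

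The only real obstacle is noticing that one can manufacture the implication ``$xr\in P^\star,\ x\notin P\Rightarrow r\in P^\star$'' by intersecting $P$ with a principal ideal $xR$ and playing stability against primeness; everything else is bookkeeping, and substituting $r=1$ makes the dichotomy fall out at once.
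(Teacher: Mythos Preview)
Your proof is correct and follows essentially the same approach as the paper: both exploit stability together with the primeness of $P$ via an element $x\in P^\star\setminus P$, the only cosmetic difference being that you intersect $P$ with $xR$ while the paper works with the conductor $(P:x)\cap R=x^{-1}(P\cap xR)$, which is the same computation up to multiplication by $x$.
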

\begin{proof}
Since $R^\star=R$, we have $P^\star\subseteq R$. Suppose $P\neq P^\star$; then, there is an $x\in P^\star\setminus P$, and in particular $R\subseteq(P^\star:x)$. Moreover, since $P$ is prime and $x\in R$, one has $(P:x)\cap R=P$. Therefore,
\begin{equation*}
P^\star=((P:x)\cap R)^\star=(P:x)^\star\cap R^\star=(P^\star:x)\cap R\ni 1,
\end{equation*}
and thus $R\subseteq P^\star$. Hence, $P^\star=R$.
\end{proof}

\begin{lemma}\label{lemma:prim}
Let $\star$ be a stable (semi)star operation on $R$, and let $L$ be a $P$-primary ideal of $R$.
\begin{enumerate}[(a)]
\item\label{lemma:prim:prim} $L^\star=(LR_P)^\star\cap R$.
\item\label{lemma:prim:pruf} If $R$ is a Pr\"ufer domain, then $L$ is $\star$-closed if and only if $LR_P$ is $\star$-closed.
\end{enumerate}
\end{lemma}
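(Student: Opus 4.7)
Part (a) is essentially a one-line consequence of stability once one records the classical identity $L=LR_P\cap R$ for a $P$-primary ideal: indeed, if $a/s\in LR_P\cap R$ with $a\in L$ and $s\in R\setminus P$, then $s\cdot(a/s)=a\in L$, and since $L$ is $P$-primary with $s\notin P$ we get $a/s\in L$. Applying $\star$, stability together with $R^\star=R$ then gives
\[
L^\star=(LR_P\cap R)^\star=(LR_P)^\star\cap R^\star=(LR_P)^\star\cap R.
\]

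For part (b), the implication ``$LR_P$ closed $\Rightarrow$ $L$ closed'' is immediate from (a): if $(LR_P)^\star=LR_P$, then $L^\star=LR_P\cap R=L$. For the converse I would assume $L=L^\star$ (the case $L=(0)$ being trivial, so I restrict to $L$ proper and nonzero) and aim at $(LR_P)^\star\subseteq LR_P$. My plan proceeds in three steps. First, axiom (4) of semistar operations makes $(LR_P)^\star$ closed under multiplication by elements of $R_P$, hence an $R_P$-submodule of $K$; the Pr\"ufer hypothesis enters here, since then $V:=R_P$ is a valuation domain. Second, I would show $(LR_P)^\star\subseteq V$: because $L\neq R$, part (a) gives $1\notin(LR_P)^\star$, and in a valuation domain any $V$-submodule of $K$ missing $1$ must lie inside $V$ (in fact inside the maximal ideal $M$), since for a nonzero $x$ in the submodule, $1\notin xV$ forces $x^{-1}\notin V$, which by the valuation property means $x\in M$. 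Third, given $y\in(LR_P)^\star$, I would write $y=a/s$ with $a\in R$, $s\in R\setminus P$; the $V$-module structure yields $sy=a\in(LR_P)^\star\cap R=L$, so $y=a/s\in LR_P$.

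The main obstacle is the middle step, confining $(LR_P)^\star$ inside $R_P$: in general an $R_P$-submodule of $K$ can genuinely extend beyond $R_P$, and without the linear ordering of ideals coming from the valuation structure, the intersection-with-$R$ information from (a) is by itself insufficient to force $(LR_P)^\star=LR_P$. This is precisely where the Pr\"ufer hypothesis is crucial.
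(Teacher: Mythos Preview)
Your argument is correct. Part (a) and the easy direction of (b) match the paper verbatim. For the converse of (b) you and the paper agree on the first two steps---$(LR_P)^\star$ is an $R_P$-module, and since $1\notin (LR_P)^\star$ the valuation structure forces $(LR_P)^\star$ to be a proper ideal of $R_P$---but diverge at the end. The paper observes that any proper ideal of $R_P$ is $PR_P$-primary (as $PR_P$ is maximal) and then invokes the bijection between $P$-primary ideals of $R$ and $PR_P$-primary ideals of $R_P$: if $(LR_P)^\star\neq LR_P$ then its contraction to $R$ could not equal $L$, contradicting part (a). You instead run a direct element chase: for $y\in(LR_P)^\star\subseteq R_P$ write $y=a/s$ with $s\notin P$, use the $R_P$-module structure to get $a=sy\in(LR_P)^\star\cap R=L^\star=L$, and conclude $y\in LR_P$. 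Your route is a bit more self-contained (it reuses part (a) directly rather than appealing to the primary correspondence), while the paper's phrasing highlights the structural reason behind the converse. The two arguments are short and essentially equivalent.
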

\begin{proof}
\ref{lemma:prim:prim} Since $L$ is $P$-primary, $L=LR_P\cap R$; therefore,
\begin{equation*}
L^\star=(LR_P\cap R)^\star=(LR_P)^\star\cap R^\star=(LR_P)^\star\cap R,
\end{equation*}
as claimed.

\ref{lemma:prim:pruf} If $LR_P$ is $\star$-closed, then, by the previous point, $L^\star=LR_P\cap R=L$.

Conversely, suppose $L=L^\star$. The set $(LR_P)^\star$ is a $R_P$-module; since $P$ is maximal in $R_P$, and $R_P$ is a valuation domain, it follows that either $1\in(LR_P)^\star$ or $LR_P$ is $PR_P$-primary. The former case is impossible, since $1\notin L=(LR_P)^\star\cap R$; however, if $(LR_P)^\star$ is $PR_P$-primary and different from $LR_P$, then $(LR_P)^\star\cap R$ cannot be equal to $L$, since there is a one-to-one correspondence between the $P$-primary ideals of $R$ and the $PR_P$-primary ideals of $R_P$. Therefore, $LR_P$ must be $\star$-closed.
\end{proof}

Let $\star$ be a semistar operation on the integral domain $R$. The \emph{quasi-spectrum} of $\star$, denoted by $\qspec{\star}(R)$, is the set of prime ideals $P$ of $R$ such that $P=P^\star\cap R$. If $\star$ is a stable semistar operation, and we suppose that $\star$ is \emph{of finite type}, i.e., that
\begin{equation*}
I^\star=\bigcup\{J^\star\mid J\subseteq I,~J\text{~is finitely generated}\},
\end{equation*}
then $\star$ is uniquely determined by the set $\qspec{\star}(R)$ (see \cite[Corollary 4.2]{anderson_overrings_1988} and \cite[page 185 and Theorem 4.12(3)]{localizing-semistar}). This does not happen if we drop the assumption that $\star$ is of finite type: for example, if $V$ is a one-dimensional valuation domain with nonprincipal maximal ideal $M$, then the quasi-spectrum of the $v$-operation and of the semistar operation $\wedge_{\{K\}}$ (that sends every nonzero $V$-submodule of $K$ to $K$) are both equal to $\{(0)\}$. However, the $v$-operation closes every $M$-primary ideal (except $M$ itself), while $\wedge_{\{K\}}$ does not. This suggests the following definition.
\begin{defin}
Let $\star$ be a stable semistar operation on the integral domain $R$. The \emph{pseudo-spectrum} $\psspec{\star}(R)$ of $\star$ is the set of all prime ideals $P$ such that $P^\star\cap R=R$ (i.e., $P^\star=R^\star$) and $L^\star\cap R=L$ for some $P$-primary ideal $L$ of $R$.
\end{defin}

\begin{prop}\label{prop:qspec-psspec}
Let $R$ be an integral domain, let $P\subsetneq Q$ be prime ideals of $R$, and let $\star$ be a stable (semi)star operation on $R$.
\begin{enumerate}[(a)]
\item If $Q\in\qspec{\star}(R)$, then $P\in\qspec{\star}(R)$.
\item If $R$ is Pr\"ufer and $Q\in\psspec{\star}(R)$, then $P\in\qspec{\star}(R)$; in particular, no two members of $\psspec{\star}(R)$ are comparable.
\end{enumerate}
\end{prop}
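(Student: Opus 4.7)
The plan is to reduce both parts to the dichotomy $P^\star \in \{P, R\}$ of Lemma \ref{lemma:spec}: in each case it suffices to rule out $P^\star = R$, since then $P^\star = P$ gives $P^\star \cap R = P$, i.e., $P \in \qspec{\star}(R)$.

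For (a), I would first note that $Q = Q^\star \cap R$ together with Lemma \ref{lemma:spec} forces $Q^\star = Q$ (the alternative $Q^\star = R$ would give $Q^\star \cap R = R \neq Q$, since $Q$ is a proper ideal). Monotonicity of $\star$ then yields $P^\star \subseteq Q^\star = Q \subsetneq R$, so a second application of Lemma \ref{lemma:spec} to $P$ gives $P^\star = P$.

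For (b), the extra input is a $Q$-primary ideal $L$ with $L^\star \cap R = L$; by Lemma \ref{lemma:prim}(a) we automatically have $L^\star \subseteq R$ and hence $L^\star = L$, and the Pr\"ufer hypothesis via Lemma \ref{lemma:prim}(b) then upgrades this to $LR_Q$ being $\star$-closed in $R_Q$. I would exploit the valuation-domain structure of $R_Q$ to sandwich $P^\star$ inside a proper ideal: since the ideals of $R_Q$ form a chain, $PR_Q$ and $LR_Q$ are comparable, and the inclusion $LR_Q \subseteq PR_Q$ would force $QR_Q = \rad(LR_Q) \subseteq PR_Q$, contradicting $P \subsetneq Q$; hence $PR_Q \subseteq LR_Q$. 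Using $P = PR_Q \cap R$ and stability one obtains
\begin{equation*}
P^\star = (PR_Q)^\star \cap R \subseteq (LR_Q)^\star \cap R = LR_Q \cap R = L \subsetneq R,
\end{equation*}
so Lemma \ref{lemma:spec} once more yields $P^\star = P$. The ``in particular'' clause is then immediate: if $Q_1 \subsetneq Q_2$ both lay in $\psspec{\star}(R)$, the statement just proved would force $Q_1 \in \qspec{\star}(R)$, i.e., $Q_1^\star \cap R = Q_1 \neq R$, contradicting the defining condition $Q_1^\star \cap R = R$ of the pseudo-spectrum.

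The main obstacle I anticipate is the initial step of (b): unlike in (a), the hypothesis on $Q$ gives $Q^\star = R$, so $Q^\star$ cannot serve as a proper upper bound on $P^\star$. The primary ideal $L$ must take that role instead, and the Pr\"ufer hypothesis is essential both to propagate $\star$-closedness from $L$ to $LR_Q$ and to use the total order on ideals of $R_Q$ to conclude $PR_Q \subseteq LR_Q$; once those two ingredients are in hand the stability identity $P^\star = (PR_Q)^\star \cap R$ closes the argument automatically.
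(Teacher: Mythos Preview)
Your argument is correct. Part (a) matches the paper's proof essentially line for line. For part (b) you take a genuinely different and more economical route. The paper shows that $PR_Q$ is itself $\star$-closed by expressing it as the intersection $PR_Q=\bigcap\{xLR_Q:x\in Q\setminus P\}$ of translates of the $\star$-closed ideal $LR_Q$, then contracts to $R$. You instead use only the single containment $PR_Q\subseteq LR_Q$ (forced by comparability in the valuation domain $R_Q$ together with $\rad(LR_Q)=QR_Q\nsubseteq PR_Q$) to bound $P^\star\subseteq L\subsetneq R$, and then invoke Lemma~\ref{lemma:spec}. Your approach is shorter and avoids the auxiliary intersection identity; the paper's approach, on the other hand, yields the slightly stronger intermediate fact that $PR_Q$ (and hence $P$) is actually $\star$-closed, rather than merely $P^\star\neq R$, though of course Lemma~\ref{lemma:spec} immediately closes that gap. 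One minor remark: your appeal to Lemma~\ref{lemma:prim}(a) to get $L^\star\subseteq R$ is fine, but the same conclusion follows more directly from $L\subseteq R=R^\star$.
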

\begin{proof}
If $Q\in\qspec{\star}(R)$, then $P^\star\subseteq Q^\star=Q$; by Lemma \ref{lemma:prim}\ref{lemma:prim:prim}, $P^\star$ must be equal to $P$, and thus $P\in\qspec{\star}(R)$.

Suppose now that $R$ is Pr\"ufer and that $Q\in\psspec{\star}(R)$. By hypothesis, there is a $Q$-primary ideal $L$ such that $L=L^\star$.

We claim that $PR_Q=\bigcap\{xLR_Q:x\in Q\setminus P\}$. Note first that $xLR_Q$ is contained in $QR_Q$ for each $x\in Q\setminus P$. If $x\notin P$, then $xLR_Q\nsubseteq PR_Q$ and thus (being $R_Q$ a valuation domain) $PR_Q\subseteq xLR_Q$: thus $PR_Q$ is in the intersection. Conversely, if $y\in Q\setminus P$, then $y\notin yLR_Q$ (since $1\notin LR_Q\subseteq QR_Q$), and thus $y$ is not in the intersection.

However, since $L$ is $\star$-closed, so is $LR_Q$ (Lemma \ref{lemma:prim}\ref{lemma:prim:pruf}); hence, every $xLR_Q$ is $\star$-closed, and since the intersection of $\star$-closed ideals is $\star$-closed, so is $PR_Q$. Thus, $PR_Q\cap R=P$ is $\star$-closed, and $P\in\qspec{\star}(R)$.

The last claim follows directly from the previous part and the fact that $\qspec{\star}(R)$ and $\psspec{\star}(R)$ are disjoint by definition.
\end{proof}

While every prime ideal can be a in the quasi-spectrum of some stable operation $\star$ (for example, when $\star$ is the identity), the same does not happen for the pseudo-spectrum.
\begin{lemma}\label{lemma:princ-ps}
Let $R$ be a Pr\"ufer domain, and let $\star$ be a stable (semi)star operation on $R$. If $PR_P$ is principal over $R_P$, then $P\notin\psspec{\star}(R)$.
\end{lemma}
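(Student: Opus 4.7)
\emph{Plan.} I would argue by contradiction. Assume $P\in\psspec{\star}(R)$, pick $p\in P$ with $pR_P=PR_P$, and choose a $P$-primary ideal $L$ with $L=L^\star\cap R$; by definition $P^\star=R$. The idea is to use $p$ to extract every negative power $p^{-k}$ from $R_P^\star$, and then push these into $LR_P$ until it contains an element outside $R_P$---contradicting $LR_P\subseteq R_P$.

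The key technical fact is the elementary inclusion $I^\star J^\star\subseteq(IJ)^\star$, valid for any semistar operation (proved from the axioms by closing $IJ$ and pulling scalars inside via property~(4)). It yields two consequences that I would use repeatedly: first, $R_P^\star\cdot R_P^\star\subseteq R_P^\star$, so $R_P^\star$ is a ring; second, for any $\star$-closed $R_P$-submodule $I$ of $K$, $R_P^\star\cdot I\subseteq I^\star=I$, so $I$ is automatically an $R_P^\star$-module. Identifying this algebraic scaffolding is the subtle part; once it is in hand, the contradiction is essentially mechanical.

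With the scaffolding in place, the argument runs in four short steps. (i) By Lemma \ref{lemma:prim}\ref{lemma:prim:pruf}, $LR_P$ is $\star$-closed. (ii) From $P\subseteq pR_P$, monotonicity and $(pI)^\star=pI^\star$ give $R=P^\star\subseteq(pR_P)^\star=pR_P^\star$, so $p^{-1}\in R_P^\star$; since $R_P^\star$ is a ring, every $p^{-k}$ lies in $R_P^\star$. (iii) Because $LR_P$ is $PR_P$-primary and $PR_P=pR_P$, we have $p\in\sqrt{LR_P}$, hence $p^n\in LR_P$ for some $n\geq 1$. (iv) Since $LR_P$ is an $R_P^\star$-module, $p^{n-k}=p^{-k}\cdot p^n\in LR_P$ for every $k\geq 0$; taking $k>n$ puts a negative power of $p$ inside $LR_P\subseteq R_P$, the desired contradiction.
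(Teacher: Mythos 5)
Your proof is correct, but it reaches the contradiction by a genuinely different mechanism than the paper. Both arguments start the same way: pass from $L$ to $LR_P$ via Lemma \ref{lemma:prim}\ref{lemma:prim:pruf} (note, as the paper does implicitly, that $L^\star\cap R=L$ is the same as $L^\star=L$ here, since $L^\star\subseteq R^\star=R$). The paper then invokes the structure of primary ideals in a valuation domain with principal maximal ideal: since $PR_P=pR_P$, the $PR_P$-primary ideal $LR_P$ is principal, say $LR_P=aR_P$, so $PR_P=pa^{-1}LR_P$ is $\star$-closed; a second application of Lemma \ref{lemma:prim}\ref{lemma:prim:pruf} gives $P=P^\star$, i.e.\ $P\in\qspec{\star}(R)$, which is disjoint from $\psspec{\star}(R)$ by definition. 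You avoid that classification of primary ideals entirely and instead exploit the condition $P^\star=R$ from the definition of the pseudo-spectrum: the standard inequality $I^\star J^\star\subseteq(IJ)^\star$ (true, and correctly derived from the axioms, though not stated in the paper) makes $R_P^\star$ a ring containing $p^{-1}$ and makes the $\star$-closed ideal $LR_P$ an $R_P^\star$-module, so $p^n\in LR_P$ forces $p^{-1}\in LR_P\subseteq R_P$ (take $k=n+1$), contradicting $p\in PR_P$. What each buys: the paper's route proves the slightly stronger fact that a quasi-$\star$-closed $P$-primary ideal forces $P\in\qspec{\star}(R)$ without ever using $P^\star=R$ (the dichotomy of Lemma \ref{lemma:spec} then makes the two formulations equivalent), and it stays entirely within the valuation-theoretic toolkit already set up in Section \ref{sect:prel}; your route is a purely multiplicative semistar argument whose only valuation-theoretic input is that $p$ is a nonunit of $R_P$, so it is somewhat more self-contained and would adapt to any situation where one can certify $p^{-1}\in R_P^\star$. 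Both proofs are complete and of comparable length.
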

\begin{proof}
Suppose there is a $P$-primary ideal $L$ that is $\star$-closed. By Lemma \ref{lemma:prim}\ref{lemma:prim:pruf}, $LR_P$ is $\star$-closed; however, $LR_P$ is $PR_P$-primary, and since $R_P$ is a valuation domain and $PR_P=pR_P$ is principal we have $LR_P=aR_P$ for some element $a$. Therefore, $PR_P=pa^{-1}LR_P$ is $\star$-closed; applying again Lemma \ref{lemma:prim}\ref{lemma:prim:pruf}, we have that $P$ is $\star$-closed, and thus $P\in\qspec{\star}(R)$.
\end{proof}

Recall that a prime ideal $P$ of a domain $R$ is \emph{branched} if there exist a $P$-primary ideal different from $P$ (see e.g. \cite[Chapter 17]{gilmer}).
\begin{lemma}\label{lemma:zarclosed}
Let $R$ be a Pr\"ufer domain, $\star$ a stable (semi)star operation on $R$, and let $P$ be a prime ideal of $R$. Then:
\begin{enumerate}[(a)]
\item\label{lemma:zarclosed:a} if $P\in\qspec{\star}(R)\cup\psspec{\star}(R)$, then $R_P$ is $\star$-closed;
\item\label{lemma:zarclosed:b} if $P$ is branched and $R_P$ is $\star$-closed, then $P\in\qspec{\star}(R)\cup\psspec{\star}(R)$.
\end{enumerate}
\end{lemma}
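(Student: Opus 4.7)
My plan is to reduce both parts to a computation inside the valuation domain $R_P$, using Lemma \ref{lemma:prim}\ref{lemma:prim:pruf} to move $\star$-closedness between $P$-primary ideals of $R$ and their extensions to $R_P$. The key valuation-domain fact that I would establish first as an interlude is: if $V$ is a valuation domain with maximal ideal $M$, then for every nonzero $M$-primary ideal $J$ of $V$ one has $(J:J)=V$. The proof observes that $(J:J)$ is an overring of $V$, hence of the form $V_Q$ for some prime $Q\subsetneq M$ if it strictly contains $V$; then choosing $y\in M\setminus Q$ gives $y^{-1}J\subseteq J$, so $J\subseteq\bigcap_n y^n V = Q$, contradicting $\sqrt{J}=M$. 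Combined with the fact that $(I:K')$ is $\star$-closed whenever $I$ is, this will upgrade the $\star$-closedness of a $PR_P$-primary ideal to the $\star$-closedness of $R_P$ itself.

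For part \ref{lemma:zarclosed:a}, I would argue uniformly that in either case there exists a $P$-primary ideal $L\subseteq R$ with $L^\star=L$: namely $L=P$ when $P\in\qspec{\star}(R)$ (observing that $P^\star\in\{P,R\}$ by Lemma \ref{lemma:spec} forces $P^\star=P$), and the witness provided by the definition of $\psspec{\star}(R)$ in the other case, where stability together with $R=R^\star$ promotes the condition $L=L^\star\cap R$ to $L=L^\star$. By Lemma \ref{lemma:prim}\ref{lemma:prim:pruf}, $LR_P$ is $\star$-closed in $R_P$ and is $PR_P$-primary, so the interlude yields $(LR_P:LR_P)=R_P$, which is therefore $\star$-closed. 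The degenerate case $P=(0)$ is handled separately by noting that $R_P=K$ is always $\star$-closed.

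For part \ref{lemma:zarclosed:b}, assume $P$ branched and $R_P$ $\star$-closed. If $P^\star=P$ then $P\in\qspec{\star}(R)$ at once, so by Lemma \ref{lemma:spec} I may suppose $P^\star=R$ and aim to place $P$ in $\psspec{\star}(R)$. Since $P$ is branched, $PR_P$ is branched in $R_P$ too, and hence admits an immediate predecessor $QR_P$ with $Q\subsetneq P$ a prime of $R$. I pick $y\in P\setminus Q$; then $yR_P$ is $PR_P$-primary in $R_P$. The crucial observation is that $yR_P=y\cdot R_P$ is already $\star$-closed, by axiom (4) applied to $R_P=(R_P)^\star$, so $L:=yR_P\cap R$ is the intersection of two $\star$-closed submodules and is $\star$-closed by stability. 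Moreover $L$ is $P$-primary as the contraction of a $PR_P$-primary ideal along a localization, and proper since $L\subseteq PR_P\cap R=P$; thus $L$ witnesses $P\in\psspec{\star}(R)$.

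The main obstacle is part \ref{lemma:zarclosed:b}: a naively chosen $P$-primary $L\subseteq R$ may well have $L^\star=R$, so one cannot simply close an arbitrary primary witness and expect it to remain inside $R$. The trick is to construct the primary ideal \emph{from inside} $R_P$ (where $yR_P$ is automatically $\star$-closed by hypothesis) and contract back, with the branchedness of $P$ providing exactly the element $y$ needed so that $yR_P$ is $PR_P$-primary rather than being primary to a smaller prime.
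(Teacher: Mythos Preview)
Your proof is correct and follows essentially the same route as the paper: for \ref{lemma:zarclosed:a}, both arguments find a $\star$-closed $P$-primary ideal $L$, pass to $LR_P$ via Lemma~\ref{lemma:prim}\ref{lemma:prim:pruf}, and show $(LR_P:LR_P)=R_P$ (the paper argues that any proper overring $V\supsetneq R_P$ would force $LV=V$, while you reach the same contradiction via $\bigcap_n y^nV$); for \ref{lemma:zarclosed:b}, both use branchedness to manufacture a principal $PR_P$-primary ideal of $R_P$, which is automatically $\star$-closed, and contract it to $R$. One small slip: in your interlude the equality $\bigcap_n y^nV=Q$ need not hold (the left side is the largest prime of $V$ not containing $y$, which may strictly contain $Q$), but since it is in any case a prime strictly inside $M$, the contradiction with $\sqrt{J}=M$ is unaffected.
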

\begin{proof}
If $P\in\qspec{\star}(R)\cup\psspec{\star}(R)$, there is a $P$-primary ideal $L$ (possibly equal to $P$) that is $\star$-closed. By Lemma \ref{lemma:prim}, it follows that $LR_P$ is $\star$-closed, and thus $(LR_P:LR_P)$ is $\star$-closed. We claim that $(LR_P:LR_P)=R_P$. Indeed, clearly $R_P\subseteq(LR_P:LR_P)$, and if the containment is strict then $LV\subseteq LR_P$ for some proper overring $V$ of $R_P$. However, since $LR_P$ is $PR_P$-primary and $R_P$ is a valuation domain, it follows that $LV=V$, and the inclusion $LV\subseteq LR_P$ would imply $1\in LR_P$, a contradiction. Hence, $R_P$ is $\star$-closed.

Conversely, suppose that $R_P$ is $\star$-closed and that $P$ is branched. The latter property implies that there is an element $x\in R_P$ such that $PR_P$ is minimal over $xR_P$; hence, $xR_P$ is $PR_P$-primary and $\star$-closed. Thus, $xR_P\cap R$ is a $P$-primary $\star$-closed ideal, and thus $P$ is either in $\qspec{\star}(R)$ or in $\psspec{\star}(R)$.
\end{proof}

Our aim is to study how much, if $R$ is a Pr\"ufer domain, the behaviour of $\star$ on $\inssubmod(R_P)$ is determined on whether $P$ is contained in $\qspec{\star}(R)$, in $\psspec{\star}(R)$ or in neither. 
\begin{prop}\label{prop:leq}
Let $\star$ be a stable (semi)star operation on $R$, let $P\in\Spec(R)$ and let $I$ be a fractional ideal of $R$. Then:
\begin{enumerate}[(a)]
\item\label{prop:leq:ps} if $P\in\psspec{\star}(R)$, then $I^\star\subseteq (IR_P)^{v_{R_P}}$, where $v_{R_P}$ is the $v$-operation on $R_P$;
\item\label{prop:leq:qs} if $P\in\qspec{\star}(R)$, then $I^\star\subseteq IR_P$.
\end{enumerate}
\end{prop}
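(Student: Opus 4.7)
I would aim to show $I^\star\subseteq IR_P$ via the standard characterization $IR_P=\{x\in K:(I:_R x)\not\subseteq P\}$. Given $x\in I^\star$, the plan is to observe that $1=x^{-1}x\in (x^{-1}I)^\star$ by property (4), and then to use stability together with $R=R^\star$ to identify
\[
(I:_R x)^\star=(x^{-1}I\cap R)^\star=(x^{-1}I)^\star\cap R,
\]
so that $1\in(I:_R x)^\star$. If $(I:_R x)\subseteq P$ were to hold, monotonicity would give $1\in(I:_R x)^\star\subseteq P^\star=P$ (here $P^\star=P$ because $P\in\qspec{\star}(R)$ and $R=R^\star$), a contradiction. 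Hence $x\in IR_P$.

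\textbf{Plan for (a).} I would use the definitional formula $(IR_P)^{v_{R_P}}=(R_P:(R_P:IR_P))$ and reduce the claim to showing $yI^\star\subseteq R_P$ for every $y\in(R_P:IR_P)$. Any such $y$ satisfies $yI\subseteq yIR_P\subseteq R_P$; applying $\star$ and using property (4) turns this into $yI^\star=(yI)^\star\subseteq R_P^\star$, so everything hinges on $R_P$ being $\star$-closed.

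\textbf{Main obstacle.} The only substantive input beyond stability is the $\star$-closedness of $R_P$ in (a). This is precisely what Lemma \ref{lemma:zarclosed}\ref{lemma:zarclosed:a} provides (in the Pr\"ufer context tacitly in force here), so invoking it finishes the proof of (a). Part (b), by contrast, needs only stability, the identity $R=R^\star$, and the definition of $\qspec{\star}(R)$; no Pr\"ufer hypothesis is used there.
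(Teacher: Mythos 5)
Your proposal is correct, but it takes a genuinely different route from the paper. The paper proves both parts at once by localizing the operation: using Lemma \ref{lemma:zarclosed}\ref{lemma:zarclosed:a} it notes that $R_P$ is $\star$-closed, so that $\star|_{\inssubmod(R_P)}$ is a (semi)star operation on the valuation domain $R_P$ and hence must be $d_{R_P}$ or $v_{R_P}$; this gives $I^\star\subseteq(IR_P)^{v_{R_P}}$ immediately, and in the quasi-spectrum case Lemma \ref{lemma:prim} shows $PR_P$ is closed, forcing the restriction to be $d_{R_P}$. You instead argue by direct colon/stability manipulations: for (b) you use the criterion $IR_P=\{x\in K\mid (I:_R x)\nsubseteq P\}$ together with $(x^{-1}I\cap R)^\star=(x^{-1}I)^\star\cap R$ and $P^\star=P$ (which indeed follows from $P^\star\subseteq R^\star=R$ and quasi-closedness), and for (a) you use $(IR_P)^{v_{R_P}}=(R_P:(R_P:IR_P))$ plus the $\star$-closedness of $R_P$. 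What your route buys: part (b) becomes completely general --- it needs neither the Pr\"ufer hypothesis nor the classification of star operations on valuation domains, only stability, $R=R^\star$ and the definition of $\qspec{\star}(R)$ --- and part (a) isolates exactly where Pr\"ufer enters (through Lemma \ref{lemma:zarclosed}\ref{lemma:zarclosed:a}); your colon computation for (a) is essentially the observation $(V:(V:I))=(IV)^{v_V}$ that the paper itself exploits later in Lemma \ref{lemma:principal}. What the paper's route buys is a shorter, unified statement identifying the local restriction $\star|_{\inssubmod(R_P)}$ explicitly as $d_{R_P}$ or $v_{R_P}$, which is the picture used in the rest of the paper. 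Your correct flagging of the tacit Pr\"ufer context is appropriate, since the paper's statement omits it but its proof (and yours, in part (a)) relies on it.
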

\begin{proof}
Without loss of generality, we can suppose that $I\subseteq R$. By Lemma \ref{lemma:zarclosed}, under both $P\in\qspec{\star}(R)$ and $P\in\psspec{\star}(R)$ the overring $R_P$ is $\star$-closed, and thus $\star|_{\inssubmod(R_P)}$ is a (semi)star operation on $R_P$: in particular, $\star|_{\inssubmod(R_P)}$ must be equal to $d_{R_P}$ (i.e., the identity on $R_P$) or $v_{R_P}$.

In both cases, $\star|_{\inssubmod(R_P)}\leq v_{R_P}$, and thus $I^\star\subseteq(IR_P)^{v_{R_P}}$, proving \ref{prop:leq:ps}. If, moreover, $P\in\qspec{\star}(R)$, then by Lemma \ref{lemma:prim} also $PR_P$ is closed, and thus $\star|_{\inssubmod(R_P)}$ must be $d_{R_P}$, and $I^\star\subseteq IR_P$. \ref{prop:leq:qs} is proved.
\end{proof}

\begin{lemma}\label{lemma:est-spettri}
Let $R$ be a Pr\"ufer domain and $D$ an overring of $R$; let $\star\in\inssemistable(R)$ and let $\sharp:=\star|_{\inssubmod(D)}\in\inssemistable(D)$. Then,
\begin{enumerate}[(a)]
\item $\qspec{\sharp}(D):=\{PD\mid P\in\qspec{\star}(R),PD\neq D\}$;
\item $\psspec{\sharp}(D):=\{PD\mid P\in\psspec{\star}(R),PD\neq D\}$.
\end{enumerate}
\end{lemma}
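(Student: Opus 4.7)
My plan is to leverage the standard Pr\"ufer correspondence $P \mapsto PD$ between $\{P \in \Spec(R) : PD \neq D\}$ and $\Spec(D)$, along with the identities $PD = PR_P \cap D$ and $LD = LR_P \cap D$ (for $L$ a $P$-primary ideal of $R$) reflecting $D_{PD} = R_P$. Combined with stability and the earlier lemmas, these identities reduce both parts to a few short computations. I also observe that for any $M \in \inssubmod(D)$, axiom (4) gives $dM^\star = (dM)^\star \subseteq M^\star$ for every $d \in D$, so $M^\star$ is automatically a $D$-module; hence $\sharp$ is indeed a well-defined stable semistar operation on $D$.

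For part (a), the forward direction runs as follows: if $P \in \qspec{\star}(R)$ then $P^\star = P$ by Lemma \ref{lemma:spec}, so by Lemma \ref{lemma:prim}\ref{lemma:prim:pruf} $PR_P$ is $\star$-closed; monotonicity applied to $PD \subseteq PR_P$ yields $(PD)^\star \subseteq PR_P$, and intersecting with $D$ gives $(PD)^\sharp \cap D \subseteq PR_P \cap D = PD$, as required. The reverse direction is immediate by contraction: $P \subseteq P^\star \cap R \subseteq (PD)^\star \cap R = (PD)^\star \cap D \cap R = PD \cap R = P$.

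For part (b), forward: if $P \in \psspec{\star}(R)$ with witness $P$-primary $L$, then $P^\star = R$ implies $(PD)^\star \supseteq R$, and since $(PD)^\star$ is a $D$-module, $(PD)^\star \supseteq D$, so $(PD)^\sharp \cap D = D$. A subtlety worth flagging: stability promotes the quasi-$\star$-closed witness $L$ to a genuinely $\star$-closed ideal, since $L \subseteq P$ forces $L^\star = (L \cap P)^\star = L^\star \cap P^\star = L^\star \cap R$, whence $L^\star = L^\star \cap R = L$. Lemma \ref{lemma:prim}\ref{lemma:prim:pruf} then makes $LR_P$ $\star$-closed, and as in (a) we get $(LD)^\star \subseteq LR_P$, so $(LD)^\sharp \cap D \subseteq LR_P \cap D = LD$, supplying the primary witness in $D$. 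For the reverse direction of (b), take $L := N \cap R$ from the $PD$-primary witness $N$ on $D$; it is $P$-primary by standard Pr\"ufer localization, and stability gives $L^\star \cap R = N^\star \cap R = N^\star \cap D \cap R = N \cap R = L$. That $P^\star = R$ must hold follows from the contrapositive: otherwise Lemma \ref{lemma:spec} would force $P^\star = P$, and the forward argument of (a) would then yield $(PD)^\sharp \cap D \subseteq PD \subsetneq D$, contradicting $PD \in \psspec{\sharp}(D)$.

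The only real obstacle is the promotion of the quasi-$\star$-closed witness $L$ to an actual $\star$-closed ideal under the pseudo-spectrum hypothesis, which is precisely what allows Lemma \ref{lemma:prim}\ref{lemma:prim:pruf} to be invoked; once that point is noted, everything else is formal manipulation with the Pr\"ufer identities and stability.
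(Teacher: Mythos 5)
Your route is essentially the paper's (transfer of quasi-closedness between $R$, $D$ and $R_P=D_{PD}$ via the Pr\"ufer correspondence and Lemma \ref{lemma:prim}), but there is a recurring gap: you treat $\star$ as a (semi)star operation. Lemma \ref{lemma:spec}, and Lemma \ref{lemma:prim} in its ``closed'' (rather than quasi-closed) form, are stated under the hypothesis $R=R^\star$, whereas here $\star$ is an arbitrary element of $\inssemistable(R)$; indeed the whole point of this lemma is its use in the proof of Theorem \ref{teor:bigleq} with $D=R^\star$ when possibly $R\neq R^\star$, so that hypothesis cannot be assumed. Concretely, your ``promotion'' step fails in this generality: from $L=L^\star\cap R$ and $P\in\psspec{\star}(R)$ you deduce $L^\star=L^\star\cap P^\star=L^\star\cap R=L$, but $P^\star\cap R=R$ only gives $R\subseteq P^\star$, not $P^\star=R$. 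For example, if $R$ is a non-local Pr\"ufer domain, $P$ a maximal ideal with $PR_P$ not principal, and $\star\colon I\mapsto(IR_P)^{v_{R_P}}$, then $P\in\psspec{\star}(R)$ while every quasi-$\star$-closed $P$-primary ideal $L$ has $L^\star=LR_P\supsetneq L$; so the witness is genuinely only quasi-$\star$-closed. The same over-assumption appears when you invoke Lemma \ref{lemma:spec} to get ``$P^\star=P$'' in part (a) and in the contrapositive at the end of part (b).

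The gap is local and repairable, and the repair is what the paper's proof actually uses: all you need is that quasi-$\star$-closedness of $P$ (resp.\ of a $P$-primary $L$) is equivalent to $\star$-closedness of $PR_P$ (resp.\ $LR_P$). This is the quasi-closed form of Lemma \ref{lemma:prim}: part \ref{lemma:prim:prim} gives $L^\star\cap R=(LR_P)^\star\cap R$ with no (semi)star hypothesis, and the proof of part \ref{lemma:prim:pruf} only uses $1\notin L=(LR_P)^\star\cap R$, so it goes through verbatim for quasi-closed ideals; likewise the dichotomy you need in (b) is $P^\star\cap R\in\{P,R\}$, which follows from the argument of Lemma \ref{lemma:spec} applied to $P^\star\cap R$. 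With these substitutions your computations for both inclusions work. A further small point: in (b) you take $LD$ as the $PD$-primary witness and silently use $LD=LR_P\cap D$; unless you verify that $LD$ is $PD$-primary, the witness should be taken to be $N:=LR_P\cap D$, i.e.\ the contraction from $D_{PD}=R_P$, which is exactly how the paper's correspondence argument proceeds and for which your computation $N^\sharp\cap D\subseteq(LR_P)^\star\cap D=N$ still works.
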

\begin{proof}
Note first that, since $R$ is a Pr\"ufer domain, the prime ideals of the overring $D$ are the extensions of the prime ideals $P$ of $R$ such that $PD\neq D$ and, for such ideals, $R_P=D_{PD}$ \cite[Theorem 26.1]{gilmer}.

Let $P\in\Spec(R)$ be such that $PD\neq D$. Both the $P$-primary ideals of $R$ and the $PD$-primary ideals of $D$ are in bijective correspondence with the $PR_P$-primary ideals of $R_P$ and, by Lemma \ref{lemma:prim}, such correspondence preserves whether the ideals are quasi-$\star$-closed (equivalently, quasi-$\sharp$-closed). The claim follows.
\end{proof}

We can now prove the main result of this section.
\begin{teor}\label{teor:bigleq}
Let $R$ be a Pr\"ufer domain and let $\star$ be a stable semistar operation on $R$. Then, for every $I\in\inssubmod(R)$,
\begin{equation}\label{eq:bigleq}
I^\star\subseteq\bigcap_{P\in\qspec{\star}(R)}IR_P\cap\bigcap_{P\in\psspec{\star}(R)}(IR_P)^{v_{R_P}},
\end{equation}
where $v_{R_P}$ is the $v$-operation on $R_P$.
\end{teor}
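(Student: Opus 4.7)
The plan is to prove the two containments corresponding to the two intersections on the right-hand side of \eqref{eq:bigleq} separately. As in the opening paragraph of Section~\ref{sect:prel}, I may assume without loss of generality that $R=R^\star$, so that $\star$ is a stable (semi)star operation and all the previous lemmas apply verbatim.

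First, for $P\in\qspec{\star}(R)$, I will establish the stronger inclusion $I^\star\subseteq IR_P$ directly. Pick $x\in I^\star$ (the case $x=0$ being trivial) and consider the integral ideal
\[ T := \{s\in R : xs\in I\} = x^{-1}I\cap R. \]
Stability applied to this intersection gives $T^\star = x^{-1}I^\star \cap R^\star = x^{-1}I^\star \cap R$; since $x\in I^\star$, this set contains $1$. If $T$ were contained in $P$, then $T^\star\subseteq P^\star$ would force $1\in P^\star\cap R=P$, a contradiction. Hence some $s\in R\setminus P$ satisfies $xs\in I$, and $x\in IR_P$.

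Second, for $P\in\psspec{\star}(R)$, I will show $I^\star\subseteq(IR_P)^{v_{R_P}}$. The Pr\"ufer hypothesis enters here through Lemma~\ref{lemma:zarclosed}\ref{lemma:zarclosed:a}, which ensures that $R_P$ is $\star$-closed. Given $x\in I^\star$, I want $x\cdot(R_P:IR_P)\subseteq R_P$; so I take an arbitrary $y\in K$ with $yIR_P\subseteq R_P$ and verify $xy\in R_P$. This is immediate: $yI\subseteq R_P$ implies $yI^\star=(yI)^\star\subseteq R_P^\star=R_P$, so in particular $xy\in R_P$.

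Combining the two inclusions yields \eqref{eq:bigleq}. Proposition~\ref{prop:leq} already handles both cases when $I$ is a fractional ideal, so the real content is the extension to an arbitrary submodule $I\in\inssubmod(R)$; this is the main obstacle, since $\star$ need not be of finite type and so $I$ cannot simply be approximated by its fractional sub-submodules. The key manoeuvre in both claims is to pit stability against the definition of the relevant spectrum (mediated by Lemma~\ref{lemma:zarclosed} in the pseudo-spectrum case), thereby sidestepping any finite-type reduction.
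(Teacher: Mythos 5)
Your argument is correct, but it reaches \eqref{eq:bigleq} by a somewhat different route than the paper. The paper reduces a general submodule to the fractional ideal $x^{-1}I\cap R$ and then quotes Proposition~\ref{prop:leq}, whose proof rests on Lemma~\ref{lemma:zarclosed} together with the fact that a valuation domain admits at most the two star operations $d$ and $v$. You handle arbitrary submodules directly: for $P\in\qspec{\star}(R)$ your stability argument with $T=x^{-1}I\cap R$ is essentially the same trick, but you close the loop via $1\in T^\star\subseteq P^\star$ and $P^\star\cap R=P$ instead of invoking Proposition~\ref{prop:leq}\ref{prop:leq:qs} (note this half needs neither the Pr\"ufer hypothesis nor the normalization $R=R^\star$); for $P\in\psspec{\star}(R)$ you replace the classification of (semi)star operations on $R_P$ by the double-colon description $(IR_P)^{v_{R_P}}=(R_P:(R_P:IR_P))$, using only that $R_P$ is $\star$-closed, and the verification $y(IR_P)\subseteq R_P\Rightarrow yI^\star\subseteq R_P^\star=R_P$ is correct and even covers the degenerate case $(R_P:IR_P)=0$. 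This makes the proof more self-contained at the cost of not factoring through Proposition~\ref{prop:leq}, which the paper reuses elsewhere. One point you should tighten is the opening reduction: the ``no loss of generality in assuming $R=R^\star$'' claim (which, incidentally, is made at the start of Section~3, not of Section~\ref{sect:prel}) is not purely formal here, because the right-hand side of \eqref{eq:bigleq} involves $\qspec{\star}(R)$, $\psspec{\star}(R)$ and the localizations $R_P$ of $R$ itself; to transfer the statement to $D:=R^\star$ one needs Lemma~\ref{lemma:est-spettri} together with $R_P=D_{PD}$, exactly as in the paper's proof. Alternatively, you can avoid the reduction altogether: your quasi-spectrum argument already works verbatim for a semistar $\star$, and for the pseudo-spectrum half all you use is $R_P^\star=R_P$, which follows by applying Lemma~\ref{lemma:zarclosed}\ref{lemma:zarclosed:a} to $\star|_{\inssubmod(D)}$ and the prime $PD$, again via Lemma~\ref{lemma:est-spettri}. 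With that justification spelled out, the proof is complete.
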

\begin{proof}
Since $R$ is Pr\"ufer, its overring $D:=R^\star$ is again a Pr\"ufer domain, and $I^\star=(ID)^\star$. By Lemma \ref{lemma:est-spettri}, and since $R_P=D_{PD}$ if $PD\neq D$, we can thus suppose $D=R$.

Let $x\in I^\star$; then, $1\in x^{-1}I^\star\cap R=(x^{-1}I\cap R)^\star$. However, $x^{-1}I\cap R$ is a fractional ideal of $R$; therefore, by Proposition \ref{prop:leq}, $1\in(x^{-1}I\cap R)R_P$ if $P\in\qspec{\star}(R)$, while $1\in[(x^{-1}I\cap R)R_P]^{v_{R_P}}$ if $P\in\psspec{\star}(R)$. In the former case, we have $1\in x^{-1}IR_P\cap R_P$, and thus $x\in IR_P$; in the latter, $1\in (x^{-1}IR_P)^{v_{R_P}}\cap R_P$, and thus $x\in (IR_P)^{v_{R_P}}$. The claim follows.
\end{proof}

\section{Classifying stable operations}
In the statement of Theorem \ref{teor:bigleq}, the right hand side of \eqref{eq:bigleq} is itself a semistar operation. Therefore, it is worthwhile to abstract it: given a stable semistar operation $\star$ on the Pr\"ufer domain $R$, we define the \emph{normalized stable version} of $\star$ as the semistar operation $\norm{\star}$ such that, for all $I\in\inssubmod(R)$,
\begin{equation*}
I^{\norm{\star}}:=\bigcap_{P\in\qspec{\star}(R)}IR_P\cap\bigcap_{P\in\psspec{\star}(R)}(IR_P)^{v_{R_P}}.
\end{equation*}

We collect the main properties of $\norm{\star}$ in the following proposition.
\begin{prop}\label{prop:normstar}
Let $R$ be a Pr\"ufer domain and let $\star$ be a stable semistar operation on $R$; let $\norm{\star}$ be its normalized stable version. Then:
\begin{enumerate}[(a)]
\item\label{prop:normstar:st} $\norm{\star}$ is a stable semistar operation;
\item\label{prop:normstar:leq} $\star\leq\norm{\star}$;
\item\label{prop:normstar:1P} if $P\in\Spec(R)$, then $1\in P^\star$ if and only if $1\in P^{\norm{\star}}$;
\item\label{prop:normstar:qs} $\qspec{\norm{\star}}(R)=\qspec{\star}(R)$;
\item\label{prop:normstar:ps} $\psspec{\norm{\star}}(R)=\psspec{\star}(R)$;
\item\label{prop:normstar:nn} $\norm{(\norm{\star})}=\norm{\star}$.
\end{enumerate}
\end{prop}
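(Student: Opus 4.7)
Parts (b) and (f) are essentially formal. Part (b) is Theorem \ref{teor:bigleq} read through the definition of $\norm{\star}$, and (f) follows at once from (d) and (e) because the definition of the normalized stable version only uses $\qspec{\star}(R)$ and $\psspec{\star}(R)$. For (a), I plan to realize $\norm{\star}$ as the infimum of an explicit family of stable semistar operations on $R$. For each $Q\in\qspec{\star}(R)$ the localization map $I\mapsto IR_Q$ is a stable semistar operation (flatness of localization gives distributivity over finite intersections, the other axioms are routine), and for each $Q\in\psspec{\star}(R)$ the composition $I\mapsto(IR_Q)^{v_{R_Q}}$ is also a stable semistar operation, since $v_{R_Q}$ is stable on the valuation domain $R_Q$ (as observed at the end of Section \ref{sect:prel}) and the remaining axioms are immediate from the fact that $(IR_Q)^{v_{R_Q}}$ is an $R_Q$-module. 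The infimum of stable semistar operations being stable semistar, (a) follows.

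For (c), the forward implication is immediate from (b). For the converse, if $1\notin P^\star$ then for any $x\in(P^\star\cap R)\setminus P$ the primality of $P$ gives $(P:x)\cap R=P$, and applying $\star$ together with the identity $(P:x)^\star=(P^\star:x)$ from Section \ref{sect:prel} yields $P^\star=((P:x)\cap R)^\star=(P^\star:x)\cap R^\star$, which contains $1$ since $x\in P^\star$---a contradiction. Hence $P^\star\cap R=P$, so $P\in\qspec{\star}(R)$; then $PR_P$ is one of the factors defining $P^{\norm{\star}}$, and since $PR_P$ is a proper ideal of $R_P$ we obtain $1\notin P^{\norm{\star}}$. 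Part (d) is a double application of (b): if $Q\in\qspec{\star}(R)$ then $Q\subseteq Q^{\norm{\star}}\cap R\subseteq QR_Q\cap R=Q$, and if $Q\in\qspec{\norm{\star}}(R)$ then $Q\subseteq Q^\star\cap R\subseteq Q^{\norm{\star}}\cap R=Q$.

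For (e), (c) already matches the condition $1\in P^\star$ for $\star$ and $\norm{\star}$, so the task reduces to transferring the existence of a primary closed ideal. The inclusion $\psspec{\norm{\star}}(R)\subseteq\psspec{\star}(R)$ is immediate, since (b) forces every $\norm{\star}$-closed submodule to be $\star$-closed. For the other inclusion I fix a $P$-primary $\star$-closed ideal $L$ and compute $L^{\norm{\star}}$ factor by factor. By Proposition \ref{prop:qspec-psspec}, no prime strictly containing $P$ lies in $\qspec{\star}(R)\cup\psspec{\star}(R)$, and for every other $Q$ in this union we have $P=\sqrt{L}\not\subseteq Q$, hence $L\not\subseteq Q$ and $LR_Q=R_Q$; so only the factor at $Q=P$ survives. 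Since $P\in\psspec{\star}(R)$ implies $\star|_{\inssubmod(R_P)}=v_{R_P}$ (the only (semi)star operations on $R_P$ are $d_{R_P}$ and $v_{R_P}$, and the former would close $PR_P$, contradicting $1\in P^\star\subseteq(PR_P)^\star$), Lemma \ref{lemma:prim}\ref{lemma:prim:pruf} gives $LR_P=(LR_P)^{v_{R_P}}$, whence $L^{\norm{\star}}\cap R=LR_P\cap R=L$. The main obstacle I expect lies precisely in this factor-by-factor analysis, where one has to rule out all spurious contributions to the intersection defining $L^{\norm{\star}}$ by carefully combining the incomparability results of Proposition \ref{prop:qspec-psspec}, the Pr\"ufer hypothesis, and the radical structure of $P$-primary ideals.
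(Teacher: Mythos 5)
Your proof is correct, and for parts (a), (b), (f) and the inclusion $\psspec{\star}(R)\subseteq\psspec{\norm{\star}}(R)$ it follows essentially the same route as the paper: $\norm{\star}$ as an infimum of the stable operations $I\mapsto IR_P$ and $I\mapsto(IR_P)^{v_{R_P}}$, part (b) as Theorem \ref{teor:bigleq}, and the hard direction of (e) by showing that $\star|_{\inssubmod(R_P)}$ acts as $v_{R_P}$ and that $LR_P$ is therefore divisorial in $R_P$ (the paper compresses exactly this into the chain $L^{\norm{\star}}\cap R\subseteq(LR_P)^\star\cap R=L^\star\cap R=L$ via Lemma \ref{lemma:prim}). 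Where you genuinely diverge is in the converse directions of (d) and (e): you obtain $\qspec{\norm{\star}}(R)\subseteq\qspec{\star}(R)$ and $\psspec{\norm{\star}}(R)\subseteq\psspec{\star}(R)$ directly from the monotonicity $\star\leq\norm{\star}$ (together with (c)), whereas the paper argues through the structure of the defining intersection and the incomparability statements of Proposition \ref{prop:qspec-psspec}; your version is shorter and avoids the case analysis on which factor of $P^{\norm{\star}}$ or $L^{\norm{\star}}$ fails to contain $1$. Likewise, in (c) you reprove Lemma \ref{lemma:spec} inline in a form valid for semistar (not just (semi)star) operations, which is if anything slightly more careful than the paper's citation. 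Two small points to tighten: in (e) your claim that $\star|_{\inssubmod(R_P)}$ is one of $d_{R_P},v_{R_P}$ presupposes that $R_P$ is $\star$-closed, so you should cite Lemma \ref{lemma:zarclosed}\ref{lemma:zarclosed:a} (or reproduce its $(LR_P:LR_P)=R_P$ argument) at that point; and "only the factor at $Q=P$ survives" is a harmless overstatement, since all you need is that the factor at $P$ already gives the bound $L^{\norm{\star}}\cap R\subseteq LR_P\cap R=L$, the remaining factors being irrelevant rather than trivial.
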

\begin{proof}
\ref{prop:normstar:st} Since the infimum of a family of stable operations is stable, we need to show that the semistar operations $d_P:I\mapsto IR_P$ and $v_P:I\mapsto(IR_P)^{v_{R_P}}$ are stable. By the flatness of $R_P$ over $R$, 
\begin{equation*}
(I\cap J)^{d_P}=(I\cap J)R_P=IR_P\cap JR_P=I^{d_P}\cap J^{d_P},
\end{equation*}
and thus $d_P$ is stable. Analogously, using the results at the end of Section \ref{sect:prel},
\begin{equation*}
\begin{array}{rcl}
(I\cap J)^{v_P} & = & ((I\cap J)R_P)^{v_{R_P}}=(IR_P\cap JR_P)^{v_{R_P}}=\\
& = & (IR_P)^{v_{R_P}}\cap(JR_P)^{v_{R_P}}=I^{v_P}\cap J^{v_P},
\end{array}
\end{equation*}
and $v_P$ is stable.

\ref{prop:normstar:leq} is exactly Theorem \ref{teor:bigleq}. 

From now on, let $P$ be a prime ideal of $R$.

\ref{prop:normstar:1P} If $1\in P^\star$ then $1\in P^{\norm{\star}}$ by Theorem \ref{teor:bigleq}; suppose $1\in P^{\norm{\star}}$. Then, $P\notin\qspec{\star}(R)$, and thus, by Lemma \ref{lemma:spec}, $1\in P^\star$.

\ref{prop:normstar:qs} If $P\in\qspec{\star}(R)$ then $P^{\norm{\star}}\cap R\subseteq PR_P\cap R=P$, and thus $P\in\qspec{\norm{\star}}(R)$. Conversely, suppose $P\in\qspec{\norm{\star}}(R)$. By the definition of $\norm{\star}$, this implies that $P\subseteq Q$ for some $Q\in\qspec{\star}(R)$ or that $P\subsetneq Q$ for some $Q\in\psspec{\star}(R)$. In both cases, $P\in\qspec{\star}(R)$ by Proposition \ref{prop:qspec-psspec}.

\ref{prop:normstar:ps} By \ref{prop:normstar:1P}, $P\notin\qspec{\star}(R)$ if and only if $P\notin\qspec{\norm{\star}}(R)$; therefore, we must prove that, if $L$ is a $P$-primary ideal, then it is quasi-$\star$-closed if and only if it is quasi-$\norm{\star}$-closed.

Suppose $P\in\psspec{\star}(R)$; then, $L=L^\star\cap R$ for some $P$-primary ideal $L$. By Lemma \ref{lemma:prim}\ref{lemma:prim:prim},
\begin{equation*}
L^{\norm{\star}}\cap R\subseteq (LR_P)^\star\cap R=L^\star\cap R=L,
\end{equation*}
and thus $P\in\psspec{\norm{\star}}(R)$. Conversely, if $P\in\psspec{\norm{\star}}(R)$, then $L^{\norm{\star}}\cap R=L$ for some $P$-primary ideal $L$. In particular, $L$ must be contained in some prime ideal $Q\in\qspec{\star}(R)\cup\psspec{\star}(R)$, and also $P$ must be contained in $Q$. If $Q\in\qspec{\star}(R)$ then $P\in\qspec{\norm{\star}}(R)=\qspec{\star}(R)$, a contradiction; if $Q\in\psspec{\star}(R)$, then $Q=P$, since otherwise $Q$ and $P$ would be comparable ideals in $\psspec{\star}(R)$, contradicting Lemma \ref{lemma:prim}\ref{lemma:prim:pruf}. Thus, $P\in\psspec{\star}(R)$.

\ref{prop:normstar:nn} follows by the previous two points.
\end{proof}

Our wish is that $\norm{\star}$ is actually equal to $\star$; however, this is not true in general, as the next example shows. 
\begin{ex}\label{ex:AD}
Let $R:=\ins{A}$ be the ring of all algebraic integers, and, for every $P\in\Max(R)$, let $\star_P$ be the semistar operation defined by
\begin{equation*}
I^{\star_P}:=\bigcap_{\substack{M\in\Max(R)\\ M\neq P}}IR_M.
\end{equation*}
By \cite[Example 4.5]{spettrali-eab}, each $\star_P$ is a (semi)star operation, and thus also $\star:=\sup\{\star_P\mid P\in\Max(R)\}$ is a (semi)star operation, and it is such that, for every $P$-primary ideal $L$, $L^\star=R$ (since $L^{\star_P}=R$).

Consider the stable operation $\overline{\star}$, defined by \cite[Definition 2.2]{anderson_two_2000}
\begin{equation*}
I^{\overline{\star}}:=\bigcup\{(I:E)\mid E\in\inssubmod(R),E^\star=R^\star\}.
\end{equation*}
Then, $1\in I^\star$ if and only if $1\in I^{\overline{\star}}$; hence, $L^{\overline{\star}}=R$ for every primary ideal $L$. This means that $\qspec{\star}(R)=\{(0)\}$ and $\psspec{\star}(R)=\emptyset$; it follows that $\norm{\overline{\star}}$ is nothing but the trivial extension $I\mapsto K$ (where $K=\overline{\insQ}$ is the quotient field of $R$). However, $\overline{\star}$ closes $R$ (this follows, for example, from the fact that $\overline{\star}|_{\insfracid(R)}$ is a star operation \cite[Theorem 2.4]{anderson_two_2000}); hence, $\overline{\star}\neq\norm{\overline{\star}}$.
\end{ex}

Thus, to obtain good results about $\norm{\star}$, we need to restrict either the class of ideals or the class of domains we consider.
\begin{prop}\label{prop:ugprim}
Let $R$ be a Pr\"ufer domain and let $\star$ be a stable (semi)star operation on $R$. If $L$ is a primary ideal of $R$, then $L^\star=L^{\norm{\star}}\cap R$.
\end{prop}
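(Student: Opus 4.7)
The plan is to prove the two inclusions separately. The inclusion $L^\star\subseteq L^{\norm{\star}}\cap R$ is immediate from $\star\leq\norm{\star}$ (Proposition \ref{prop:normstar}\ref{prop:normstar:leq}) together with $L^\star\subseteq R^\star=R$. For the reverse, I split into three cases according to whether $P\in\qspec{\star}(R)$, $P\in\psspec{\star}(R)$, or $P$ belongs to neither spectrum.

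If $P\in\qspec{\star}(R)$, Proposition \ref{prop:leq}\ref{prop:leq:qs} gives $L^\star\subseteq LR_P$; combined with $L^\star\subseteq R$ and the $P$-primarity of $L$, this forces $L^\star\subseteq LR_P\cap R=L$, and the same computation applied to $L^{\norm{\star}}\cap R$ (using that $LR_P$ appears as a factor in the definition of $\norm{\star}$) shows both sides equal $L$. If $P\in\psspec{\star}(R)$, then $R_P$ is $\star$-closed by Lemma \ref{lemma:zarclosed}\ref{lemma:zarclosed:a} while $PR_P$ is non-principal by Lemma \ref{lemma:princ-ps}; the discussion in Section \ref{sect:prel} then forces $\star$ to act as $v_{R_P}$ on fractional ideals of $R_P$, whence Lemma \ref{lemma:prim}\ref{lemma:prim:prim} identifies $L^\star$ with $(LR_P)^{v_{R_P}}\cap R$, which contains $L^{\norm{\star}}\cap R$ by construction of $\norm{\star}$.

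The delicate case is $P\notin\qspec{\star}(R)\cup\psspec{\star}(R)$. I first use Proposition \ref{prop:qspec-psspec} to rule out any $Q\in\qspec{\star}(R)\cup\psspec{\star}(R)$ containing $P$: such a $Q$ would drag $P$ into one of the two spectra. Consequently, for every $Q$ in the union there is some $y\in P\setminus Q$, and since $L$ is $P$-primary, $y^n\in L$ for some $n$, so $LR_Q=R_Q$. This collapses every term in the definition of $L^{\norm{\star}}$ to $R_Q$, yielding $L^{\norm{\star}}\cap R=R$. It remains to prove $L^\star=R$. When $L=P$, this is immediate from Lemma \ref{lemma:spec}, since $P\notin\qspec{\star}(R)$ forces $P^\star=R$. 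When $L\neq P$ the prime $P$ is branched, and I work in $R_P$: by Lemma \ref{lemma:prim}\ref{lemma:prim:prim} one has $L^\star=(LR_P)^\star\cap R$, and since $R_P$ is a valuation domain its $R_P$-submodules of $K$ are totally ordered, so $(LR_P)^\star$ is comparable with $R_P$. If $(LR_P)^\star\supseteq R_P$ then $L^\star=R$, as desired; otherwise $(LR_P)^\star$ is a proper ideal of $R_P$ containing the $PR_P$-primary ideal $LR_P$, hence is itself $PR_P$-primary, and contracting back gives a $P$-primary, quasi-$\star$-closed ideal $L':=(LR_P)^\star\cap R$ of $R$, which places $P$ into $\qspec{\star}(R)\cup\psspec{\star}(R)$, contradicting the case hypothesis.

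The main obstacle is the third case: the hypothesis that $P$ escapes both spectra is a negative statement, and converting it into the strong positive conclusion $L^\star=R$ requires dichotomizing $(LR_P)^\star$ via the total ordering of $R_P$-submodules of $K$ and then using the correspondence of Lemma \ref{lemma:prim} between $P$-primary ideals of $R$ and $PR_P$-primary ideals of $R_P$ to exclude every intermediate possibility.
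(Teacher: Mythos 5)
Your proof is correct and follows the paper's overall strategy: the easy inclusion from $\star\leq\norm{\star}$, then a trichotomy on the position of $P=\rad(L)$ relative to $\qspec{\star}(R)$ and $\psspec{\star}(R)$. Cases one and three essentially coincide with the paper's (in the third case the paper also reduces, via the proof of Lemma~\ref{lemma:zarclosed}, to the observation that $1\notin(LR_P)^\star$ would produce a $\star$-closed $PR_P$-primary ideal; your explicit computation that $L^{\norm{\star}}\cap R=R$ is correct but not needed, since once $L^\star=R$ the chain $R=L^\star\subseteq L^{\norm{\star}}\cap R\subseteq R$ closes the case). Where you genuinely diverge is the case $P\in\psspec{\star}(R)$: the paper runs a subcase analysis on the shape of $LR_P$ (equal to $PR_P$, of the form $xPR_P$, or divisorial in $R_P$), while you identify the restriction of $\star$ to $\insfracid(R_P)$ as $v_{R_P}$ once and for all and conclude in one line from Lemma~\ref{lemma:prim}\ref{lemma:prim:prim}; this is cleaner and avoids the ideal-by-ideal computation.

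One small point needs to be said explicitly in that case: the facts you cite ($R_P$ is $\star$-closed and $PR_P$ is nonprincipal) only guarantee that the restriction of $\star$ to $R_P$ is a star operation and that $d_{R_P}\neq v_{R_P}$; they do not by themselves exclude the possibility that the restriction is $d_{R_P}$. To force $v_{R_P}$ you must also use that $P\in\psspec{\star}(R)$ implies $P$ is not quasi-$\star$-closed, so by Lemma~\ref{lemma:prim}\ref{lemma:prim:pruf} the ideal $PR_P$ is not $\star$-closed, which rules out $d_{R_P}$. This is a one-line addition using a lemma you already have in play, so it is a gap in exposition rather than in substance.
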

\begin{proof}
Let $P$ be the radical of $L$. We distinguish three cases.
\begin{itemize}[leftmargin=*]
\item $P\in\qspec{\star}(R)$. Then, $L^{\norm{\star}}\cap R\subseteq LR_P\cap R=L$, and thus $L=L^{\norm{\star}}\cap R=L^\star$.

\item $P\in\psspec{\star}(R)$. Then, by Proposition \ref{prop:qspec-psspec},
\begin{equation*}
L^{\norm{\star}}\cap R=(LR_P)^{v_{R_P}}\cap R.
\end{equation*}
If $LR_P=PR_P$, then $L$ must be exactly $P$, and thus $L^\star=R=L^{\norm{\star}}\cap R$. Analogously, if $LR_P=xPR_P$ for some $x$, then by Lemmas \ref{lemma:prim}\ref{lemma:prim:prim} and \ref{lemma:zarclosed}\ref{lemma:zarclosed:a}
\begin{equation*}
L^\star=(LR_P)^\star\cap R=(xPR_P)^\star\cap R=x(PR_P)^\star\cap R=xR_P\cap R=L^{\norm{\star}}\cap R,
\end{equation*}
with the last equality coming from the definition of $\norm{\star}$. Suppose $LR_P\neq yPR_P$ for all $y$. Then, $LR_P$ is divisorial in $R_P$, and thus $L^{\norm{\star}}\cap R=L$. Since $L^\star\subseteq L^{\norm{\star}}\cap R$ always, the two ideals must coincide.

\item $P\notin \qspec{\star}(R)\cup\psspec{\star}(R)$. If $P=L$ then the result follows by Proposition \ref{prop:normstar}\ref{prop:normstar:1P}. If $P\neq L$, then $P$ is branched; by Lemma \ref{lemma:zarclosed}, $R_P$ is not $\star$-closed, and thus no $PR_P$-primary $I$ ideal can be $\star$-closed (by the proof of Lemma \ref{lemma:zarclosed}), and $1\in I^\star$ for any such $I$. In particular, $1\in L^\star$ and $L^\star=R$.

\end{itemize}

In all cases, $L^\star=L^{\norm{\star}}\cap R$. The claim is proved.
\end{proof}

The previous proposition shows that $\norm{\star}$ behaves well on primary ideals. In view of the definition of stability, the previous result can also be extended to ideals that are finite intersections of primary ideals, i.e., ideals that have a primary decomposition. In the next proposition, we prove a slightly weaker property for a slightly larger class of ideals.

We start with a lemma.
\begin{lemma}\label{lemma:min-prim}
Let $R$ be a Pr\"ufer domain, and let $I$ be a proper ideal of $R$ whose radical is equal to $P\in\Spec(R)$.
\begin{enumerate}[(a)]
\item\label{lemma:min-prim:cont} If $L$ is an ideal with radical $P$ and $LR_P\subsetneq IR_P$, then $L\subseteq I$.
\item\label{lemma:min-prim:prim} $I$ contains a a $P$-primary ideal.
\end{enumerate}
\end{lemma}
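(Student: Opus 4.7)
The plan is to prove (a) by localizing at maximal ideals, and then to prove (b) by applying (a) when $P$ is branched and arguing directly that $I=P$ when $P$ is unbranched.

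For (a), I would show $LR_M\subseteq IR_M$ for each maximal ideal $M$; this gives $L\subseteq I$ because $x\in I$ iff $(I:x)$ is not contained in any maximal ideal, equivalently iff $x\in IR_M$ for every $M$. If $P\not\subseteq M$, pick $q\in P\setminus M$; since $\sqrt{L}=\sqrt{I}=P$, some power of $q$ lies in each of $L$ and $I$ and is a unit in $R_M$, so $LR_M=R_M=IR_M$. If $P\subseteq M$, then $R_M$ is a valuation domain and its ideals are totally ordered by inclusion, so the case $IR_M\subsetneq LR_M$ would produce the opposite strict inclusion after further localizing at $PR_M$ (which recovers $R_P$), contradicting $LR_P\subsetneq IR_P$; hence $LR_M\subseteq IR_M$.

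For (b) when $P$ is branched, $PR_P$ is branched in the valuation domain $R_P$, so there is $p\in P$ with $\sqrt{pR_P}=PR_P$. Since $p\in\sqrt{I}$, some $p^k$ lies in $I$, whence $p^{k+1}R_P\subsetneq p^kR_P\subseteq IR_P$ (strictness because $p$ is a nonunit in $R_P$). The ideal $L:=p^{k+1}R_P\cap R$ is $P$-primary (as the contraction of the $PR_P$-primary ideal $p^{k+1}R_P$) and satisfies $LR_P=p^{k+1}R_P\subsetneq IR_P$, so (a) gives $L\subseteq I$.

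The unbranched case is the main obstacle because (a) cannot be applied directly: the only $PR_P$-primary ideal of $R_P$ is $PR_P$ itself, and $PR_P\supseteq IR_P$. I would instead show $I=P$, so that $L:=P$ itself (a prime, hence primary, ideal) is the required $P$-primary ideal contained in $I$. Working in $R_P$, if $IR_P\subsetneq PR_P$ pick $y\in PR_P\setminus IR_P$ and let $Q\subseteq PR_P$ be the smallest prime of $R_P$ containing $y$. If $Q=PR_P$ then $yR_P$ would be a principal $PR_P$-primary ideal; since an unbranched prime of a valuation domain is not principal, $yR_P\subsetneq PR_P$, giving a proper $PR_P$-primary ideal distinct from $PR_P$ and contradicting unbranchedness. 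Hence $Q\subsetneq PR_P$, and since $\sqrt{IR_P}=PR_P\not\subseteq Q$ there is $x\in IR_P\setminus Q$; in the valuation $R_P$ either $x\in yR_P$ or $y\in xR_P$, and the first alternative would place $x$ in $Q$, so $y\in xR_P\subseteq IR_P$, a contradiction. Thus $IR_P=PR_P$, and the same argument at every maximal $M\supseteq P$ (trivial for $M\not\supseteq P$) yields $IR_M=PR_M$, so $I=\bigcap_M IR_M=\bigcap_M PR_M=P$.
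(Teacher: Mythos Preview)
Your argument is correct and, for part (a) and the branched case of (b), follows the paper's proof almost verbatim. One cosmetic point in (a): you say $IR_M\subsetneq LR_M$ ``would produce the opposite strict inclusion after further localizing at $PR_M$'', but localization need not preserve strictness. You do not need it: from $IR_M\subseteq LR_M$ one gets $IR_P\subseteq LR_P$, which already contradicts $LR_P\subsetneq IR_P$. This is precisely how the paper phrases it.

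The only substantive difference is the unbranched case of (b). The paper dispatches it in one line by citing Gilmer's Theorem 23.3(e), which says that in a Pr\"ufer domain an ideal with unbranched prime radical $P$ must equal $P$. You instead supply a direct proof of this fact. Your argument in $R_P$ is fine, but the claim that ``the same argument at every maximal $M\supseteq P$'' works needs a small patch: your treatment of the sub-case $Q=PR_P$ uses that $PR_P$ is \emph{maximal} in $R_P$ to conclude that $yR_P$ is $PR_P$-primary, and $PR_M$ need not be maximal in $R_M$. The repair is immediate, however: your $R_P$ analysis already shows that the sub-case $Q=PR_P$ is impossible, i.e.\ every element of $PR_P$ lies in a strictly smaller prime, so $P=\bigcup_{Q'\subsetneq P}Q'$. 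Hence for any $y\in PR_M$ one has $y\in Q'R_M$ for some prime $Q'\subsetneq P$, and only your second sub-case (which transfers to $R_M$ without change) is needed. With this remark your proof is complete and gives a self-contained replacement for the citation to Gilmer.
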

\begin{proof}
\ref{lemma:min-prim:cont} Suppose $L\nsubseteq I$. Then, there is a maximal ideal $M$ of $R$ such that $LR_M\nsubseteq IR_M$; since $R_M$ is a valuation domain, this implies that $IR_M\subseteq LR_M$, and thus $IR_MR_P\subseteq LR_MR_P$. Since $\rad(I)=\rad(L)=P$, $M$ contains $P$, and thus $R_MR_P=R_P$; therefore, $IR_P\subseteq LR_P$, against the hypothesis $LR_P\subsetneq IR_P$. Hence, $L\subseteq I$.

\ref{lemma:min-prim:prim} If $P$ is not branched, then $P=I$ \cite[Theorem 23.3(e)]{gilmer}, and $P$ is the requested primary ideal. If $P$ is branched, it is minimal over a principal ideal $xR$, and thus $PR_P$ is the radical of $xR_P$. Moreover, $\rad(IR_P)=PR_P$, and thus $x^n\in IR_P$ for some integer $n$; let $L:=x^{n+1}R_P\cap R$. We claim that $L$ is the requested ideal. Indeed, it is $P$-primary since it is the restriction of a $PR_P$-primary ideal (since $\rad(x^{n+1}R_P)=\rad(xR_P)=PR_P$ and $PR_P$ is maximal in $R_P$), and $LR_P=x^{n+1}R_P\subsetneq IR_P$ since $x^{n+1}R_P\subsetneq x^nR_P\subseteq IR_P$. Hence, we can apply the previous point. 
\end{proof}

If $I$ is an ideal of $R$, we denote by $V(I)$ the set of prime ideals of $R$ containing $I$, and by $\Min(I)$ the set of its minimal primes.
\begin{teor}\label{teor:MinIfin}
Let $R$ be a Pr\"ufer domain, $\star$ a stable semistar operation on $R$, and let $I$ be a proper ideal of $R$ such that $\Min(I)$ is finite. Then, $1\in I^\star$ if and only if $1\in I^{\norm{\star}}$.
\end{teor}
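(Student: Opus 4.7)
The plan is to reduce the statement to the case of a single primary ideal, where Proposition~\ref{prop:ugprim} already gives the conclusion, via a finite primary decomposition of $I$ whose existence is guaranteed by the finiteness of $\Min(I)$. As a preliminary reduction, I would pass from $\star$ on $R$ to its restriction on $D:=R^\star$, using Lemma~\ref{lemma:est-spettri} to transport $\qspec{\star}$ and $\psspec{\star}$; this lets me assume $R=R^\star$. The easy direction is then immediate from $\star\le\norm{\star}$ (Proposition~\ref{prop:normstar}\ref{prop:normstar:leq}).

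The heart of the proof is to show that $I=L_1\cap\cdots\cap L_n$, where $P_1,\dots,P_n$ are the minimal primes of $I$ and $L_i:=IR_{P_i}\cap R$. Each $L_i$ is $P_i$-primary: since $P_i$ is minimal over $I$, $IR_{P_i}$ is $P_iR_{P_i}$-primary in the valuation domain $R_{P_i}$, and $P_i$-primary ideals of $R$ correspond to $P_iR_{P_i}$-primary ideals of $R_{P_i}$. For the equality $I=\bigcap_i L_i$, the inclusion $\subseteq$ is trivial; for $\supseteq$, since $R$ is Pr\"ufer I would verify equality after localizing at each maximal ideal $M$.

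The localization check splits into two cases. If $I\nsubseteq M$, then no $P_i$ lies in $M$ (as $I\subseteq P_i$), so $L_iR_M=R_M$ for every $i$ and both sides are $R_M$. If $I\subseteq M$, then exactly one $P_i$ is contained in $M$: the primes of $R$ contained in $M$ form a chain in the valuation domain $R_M$, and distinct minimal primes of $I$ are incomparable. Say $P_1\subseteq M$; for $j\ne 1$ we still have $L_jR_M=R_M$, so $(\bigcap_i L_i)R_M=L_1R_M$. The task reduces to $L_1R_M=IR_M$, which I would establish by noting that both are $P_1R_M$-primary ideals of $R_M$ extending to the same ideal $IR_{P_1}=L_1R_{P_1}$ of $R_{P_1}$, and then invoking the bijection between $P_1R_M$-primary ideals of $R_M$ and $P_1R_{P_1}$-primary ideals of $R_{P_1}$. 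This local comparison is the main technical obstacle; everything else is essentially formal.

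Once $I=\bigcap_i L_i$ is secured, stability of $\star$ and of $\norm{\star}$ (Proposition~\ref{prop:normstar}\ref{prop:normstar:st}) gives $I^\star=\bigcap_i L_i^\star$ and $I^{\norm{\star}}=\bigcap_i L_i^{\norm{\star}}$. By Proposition~\ref{prop:ugprim}, $L_i^\star=L_i^{\norm{\star}}\cap R$ for every $i$; since $L_i^\star\subseteq R^\star=R$, this means that $1\in L_i^\star$ iff $1\in L_i^{\norm{\star}}$. Chaining these equivalences yields $1\in I^\star$ iff $1\in L_i^\star$ for every $i$ iff $1\in L_i^{\norm{\star}}$ for every $i$ iff $1\in I^{\norm{\star}}$, which is the desired conclusion.
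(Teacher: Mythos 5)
Your reduction collapses at its central step: the claimed equality $I=L_1\cap\cdots\cap L_n$ with $L_i:=IR_{P_i}\cap R$ is false in general, so the whole strategy of feeding Proposition~\ref{prop:ugprim} a primary decomposition of $I$ does not get off the ground. The local verification is where the error hides: you assert that $IR_M$ is $P_1R_M$-primary because $\rad(IR_M)=P_1R_M$, but in a valuation domain an ideal whose radical is a \emph{non-maximal} prime need not be primary, and the bijection between $P_1R_M$-primary ideals of $R_M$ and $P_1R_{P_1}$-primary ideals of $R_{P_1}$ only applies to genuinely primary ideals. Concretely, let $V$ be a valuation domain with value group $\insZ\oplus\insZ$ ordered lexicographically, let $P\subsetneq M$ be its two nonzero primes, and let $I=xV$ with $\valut(x)=(1,0)$. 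Then $\Min(I)=\{P\}$ is finite, but $IV_P\cap V=P\supsetneq I$ (an element of value $(1,-1)$ lies in $P\setminus I$), so your decomposition yields $P$, not $I$; equivalently, $xV$ has radical $P$ but is not $P$-primary (take $a$ of value $(0,1)$ and $y$ of value $(1,-1)$: $ay\in xV$, $a\notin P$, $y\notin xV$). This is exactly the phenomenon the paper flags just before Lemma~\ref{lemma:min-prim}: ideals admitting a primary decomposition are easy to handle by stability plus Proposition~\ref{prop:ugprim}, but Theorem~\ref{teor:MinIfin} is about the strictly larger class of ideals with finitely many minimal primes, and your argument only covers the smaller class.

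For comparison, the paper's proof avoids any primary decomposition of $I$: it writes $I=IT_1\cap\cdots\cap IT_n\cap R$ with $T_i:=\bigcap\{R_Q\mid Q\in V(P_i)\}$ (an intersection of localizations, not of primary ideals), reduces by stability to showing $1\in(IT_i)^\star$, and then argues on $J:=IT_i$, whose radical $Q=P_iT_i$ is comparable with every prime of $T_i$. It splits into cases according to whether $Q$ lies in $\qspec{\sharp}(T_i)$, in $\psspec{\sharp}(T_i)$, or in neither: in the last case it only needs \emph{some} $Q$-primary ideal \emph{inside} $J$ (Lemma~\ref{lemma:min-prim}\ref{lemma:min-prim:prim}), and in the pseudo-spectral case it shows $JR_Q=QR_Q$ and deduces $qQ\subseteq J$ for $q\in Q$ via Lemma~\ref{lemma:min-prim}\ref{lemma:min-prim:cont}, whence $Q\subseteq J^\sharp$ and $1\in J^\sharp$. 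If you want to salvage your outline, you would need to replace the false identity $I=\bigcap_i(IR_{P_i}\cap R)$ by an argument of this kind that works when $I$ has no primary decomposition.
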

\begin{proof}
If $1\in I^\star$, then $1\in I^{\norm{\star}}$ by Theorem \ref{teor:bigleq}.

Suppose $1\in I^{\norm{\star}}$, and let $\Min(I):=\{P_1,\ldots,P_n\}$. For each $i=1,\ldots,n$, let $T_i:=\bigcap\{R_Q\mid Q\in V(P)\}$; then, each maximal ideal containing $I$ survives in some $T_i$, and thus $I=IT_1\cap\cdots\cap IT_n\cap R$. Hence, $I^\star=(IT_1)^\star\cap\cdots\cap (IT_n)^\star\cap R^\star$, and analogously for $\norm{\star}$; hence, it is enough to show that $1\in(IT_i)^{\star}$ for every $i$.

Fix an $i$, and let $P:=P_i$ and $T:=T_i$. Since $V(P)$ is compact, the map $\flat:I\mapsto\bigcap\{IR_M\mid M\in V(P)\}$ is a finite-type semistar operation on $R$ \cite[Corollary 4.4]{topological-cons} and thus also on $T$; moreover, since $T$ is Pr\"ufer (being an overring of a Pr\"ufer domain) and closed by $\flat$, the map $\flat|_{\inssubmod(T_i)}$ must be the identity. In particular, the prime ideals of $T$ are exactly the extensions of the prime ideals of $R$ contained in some $M\in\Max(R)\cap V(P)$; therefore, $Q:=PT$ is contained in every maximal ideal of $T$, and (being $\Spec(T)$ a tree) any prime ideal of $T$ is comparable with $Q$.

Let now $J:=IT$ and $\sharp:=\star|_{\inssubmod(T)}$; then, $\Min(J)=\{Q\}$ (i.e., $\rad(J)=Q$). Since $\qspec{\sharp}(T)$ and $\psspec{\sharp}(T)$ are, respectively, the extensions of the prime ideals in $\qspec{\star}(R)$ and $\psspec{\star}(R)$ that survive in $T$ (Lemma \ref{lemma:est-spettri}), we have $\norm{\sharp}=\norm{\star}|_{\inssubmod(T)}$.

Since $1\in J^{\norm{\sharp}}$, we have $Q\notin\qspec{\sharp}(T)$. On the other hand, if $Q\notin\qspec{\sharp}(T)\cup\psspec{\sharp}(T)$, $1\in L^\sharp$ for every $Q$-primary ideal $L$. By Lemma \ref{lemma:min-prim}\ref{lemma:min-prim:prim} we can find a $L\subseteq J$, and thus $1\in J^\sharp$.

Suppose now $Q\in\psspec{\sharp}(T)$. Then $J^{\norm{\sharp}}\subseteq(JR_Q)^{v_{R_Q}}$, and thus (since $1\in J^{\norm{\sharp}}$) $JR_Q=QR_Q$. For every $q\in QR_Q$, we have $qQR_Q\subsetneq JR_Q$; in particular, if $q\in Q$, by Lemma \ref{lemma:min-prim}\ref{lemma:min-prim:cont} we have $qQ\subseteq J$. Therefore,
\begin{equation*}
q=q\cdot 1\in qQ^\sharp=(qQ)^\sharp\subseteq J^\sharp,
\end{equation*}
and so $Q\subseteq I^\sharp$. Hence, $Q^\sharp\subseteq J^\sharp$, and so $1\in J^\sharp$.

Therefore, $1\in J^\sharp$ in every case, as requested.
\end{proof}

In a global perspective, we get immediately the following result. 
\begin{cor}\label{cor:MinIfin-glob}
Let $R$ be a Pr\"ufer domain such that every proper ideal has only a finite number of minimal primes. Then, $\star=\norm{\star}$ for every stable semistar operation $\star$ on $R$.
\end{cor}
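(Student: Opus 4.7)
The plan is essentially a one-step deduction from Theorem \ref{teor:MinIfin}, exploiting the criterion for equality of stable semistar operations recalled in Section~\ref{sect:prel}: two stable semistar operations $\star_1,\star_2$ on $R$ coincide if and only if, for every integral ideal $I$, $1\in I^{\star_1}$ is equivalent to $1\in I^{\star_2}$. Since by Proposition \ref{prop:normstar}\ref{prop:normstar:st} the normalized version $\norm{\star}$ is itself a stable semistar operation, this reduces the problem to comparing the sets $\{I\subseteq R\mid 1\in I^\star\}$ and $\{I\subseteq R\mid 1\in I^{\norm{\star}}\}$.

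Under the hypothesis of the corollary, every proper ideal $I$ of $R$ has a finite set $\Min(I)$ of minimal primes, so the assumption of Theorem \ref{teor:MinIfin} is satisfied for \emph{every} such $I$. Applying that theorem to each proper integral ideal of $R$ gives the biconditional $1\in I^\star\Longleftrightarrow 1\in I^{\norm{\star}}$, which is exactly what the equality criterion requires. Hence $\star=\norm{\star}$.

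There is no real obstacle to overcome here, since Theorem \ref{teor:MinIfin} already carried out the delicate work of localizing to the overring $T_i$, reducing to the case where the radical of $I$ is prime, and then doing the case analysis on whether the minimal prime lies in $\qspec{\star}(R)$, in $\psspec{\star}(R)$ or outside both. The only subtlety worth mentioning in the write-up is that the equality criterion is phrased for stable semistar (not just (semi)star) operations, so one need not separately verify that $R^\star=R^{\norm{\star}}$; this is automatically absorbed into the ``$I=R$'' case, which is trivial.
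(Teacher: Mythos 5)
Your proposal is correct and follows essentially the same route as the paper: since $\norm{\star}$ is stable (Proposition \ref{prop:normstar}\ref{prop:normstar:st}), the equality criterion for stable semistar operations reduces everything to the biconditional $1\in I^\star\Leftrightarrow 1\in I^{\norm{\star}}$, which Theorem \ref{teor:MinIfin} supplies for every proper ideal under the finiteness hypothesis on $\Min(I)$. Nothing further is needed.
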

\begin{proof}
Since $\star$ and $\norm{\star}$ are both stable, it suffices to show that the set of (proper) ideals $I$ of $R$ such that $1\in I^\star$ coincide with the set of ideals such that $1\in I^{\norm{\star}}$. However, this follows from Theorem \ref{teor:MinIfin}.
\end{proof}

\begin{cor}\label{cor:semiloc}
Let $R$ be a semilocal Pr\"ufer domain. Then, $\star=\norm{\star}$ for every stable semistar operation $\star$ on $R$.
\end{cor}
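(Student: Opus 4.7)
The plan is to derive this as an immediate consequence of Corollary \ref{cor:MinIfin-glob}: it suffices to verify that every proper ideal of a semilocal Pr\"ufer domain has only finitely many minimal primes, and then quote the preceding corollary.

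To prove the finiteness of $\Min(I)$ for an arbitrary proper ideal $I$, let $M_1,\ldots,M_n$ be the (finitely many) maximal ideals of $R$. The key structural fact I would invoke is that in a Pr\"ufer domain the set of prime ideals contained in any given maximal ideal $M_i$ is totally ordered, since the localization $R_{M_i}$ is a valuation domain and its prime spectrum is a chain (via the usual bijection between primes of $R$ contained in $M_i$ and primes of $R_{M_i}$). Consequently, among the prime ideals of $R$ contained in a fixed $M_i$, no two distinct ones can both be minimal over $I$: if $P_1,P_2 \in \Min(I)$ were both contained in $M_i$, they would be comparable, contradicting minimality.

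Since every minimal prime of $I$ is contained in at least one maximal ideal, the above shows that $\Min(I)$ has at most $n$ elements, hence is finite. Applying Corollary \ref{cor:MinIfin-glob} then yields $\star=\norm{\star}$ for every stable semistar operation $\star$ on $R$.

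No genuine obstacle is expected here; the corollary is essentially a one-line invocation of the more general result, with the only substantive point being the (standard) observation that the Pr\"ufer hypothesis forces the prime spectrum to be a tree with chains below each maximal ideal.
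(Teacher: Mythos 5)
Your proof is correct and follows essentially the same route as the paper: both arguments bound $|\Min(I)|$ by the (finite) number of maximal ideals, using that the primes below a fixed maximal ideal form a chain (equivalently, that $\Spec(R)$ is a tree for a Pr\"ufer domain), and then invoke Corollary \ref{cor:MinIfin-glob}. No issues.
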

\begin{proof}
Let $I$ be a proper ideal of $R$; then, $V(I)\cap\Max(R)$ is finite. Moreover, every $M\in V(I)\cap\Max(R)$ contains only one minimal prime of $I$, since $\Spec(R)$ is a tree. The claim follows from Corollary \ref{cor:MinIfin-glob}.
\end{proof}

Recall that a topological space is \emph{Noetherian} if every subset if compact, or equivalently if every ascending chain of radical ideals stabilizes.
\begin{cor}\label{cor:noethspec}
Let $R$ be a Pr\"ufer domain with Noetherian spectrum. Then, $\star=\norm{\star}$ for every stable semistar operation $\star$ on $R$.
\end{cor}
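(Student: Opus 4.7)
The plan is to reduce this corollary directly to Corollary \ref{cor:MinIfin-glob}, which already handles the case where every proper ideal has only finitely many minimal primes. Thus the only substantive task is to verify that Noetherianity of $\Spec(R)$ forces this finiteness condition on $\Min(I)$ for every proper ideal $I$.

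To do this, fix a proper ideal $I$ of $R$ and consider the closed subspace $V(I)\subseteq\Spec(R)$. Since $\Spec(R)$ is Noetherian, so is $V(I)$ in the subspace topology. By the classical decomposition theorem for Noetherian topological spaces, $V(I)$ has only finitely many irreducible components. Under the standard order-reversing bijection between irreducible closed subsets of $\Spec(R)$ and prime ideals of $R$ (namely $V(P)\leftrightarrow P$), the maximal irreducible closed subsets of $V(I)$ correspond precisely to the minimal primes of $I$. Hence $\Min(I)$ is finite.

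With the finiteness of $\Min(I)$ established for every proper ideal $I$, Corollary \ref{cor:MinIfin-glob} applies and gives $\star=\norm{\star}$ for every stable semistar operation $\star$ on $R$, as required. There is really no obstacle here: the Pr\"ufer hypothesis is used only implicitly, via Corollary \ref{cor:MinIfin-glob}, and the topological input is entirely standard.
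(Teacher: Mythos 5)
Your proposal is correct and follows the paper's own argument: the paper likewise reduces to Corollary \ref{cor:MinIfin-glob} by noting that a Noetherian spectrum forces every ideal to have finitely many minimal primes, citing \cite[Chapter 6, Exercises 5 and 7]{atiyah} for exactly the fact you prove via irreducible components. The only difference is that you spell out the standard topological argument instead of citing it.
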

\begin{proof}
If $\Spec(R)$ is Noetherian, then every ideal has only finitely many minimal primes \cite[Chapter 6, Exercises 5 and 7]{atiyah}. We can apply Corollary \ref{cor:MinIfin-glob}.
\end{proof}

Hence, classifying the stable semistar operations on the Pr\"ufer domains considered above amounts to characterize the different $\norm{\star}$. 

We denote by $\Zar(R)$ the set of valuation overrings of $R$, and by $\inssemisv(R)$ the set of semistar operations $\star$ such that $R^\star$ is a valuation domain; if $X\subseteq\inssemisv(R)$, let $X^\uparrow:=\{\star\in\inssemisv(R)\mid\star\geq\star_1\text{~for some~}\star_1\in X\}$. Denote also by $\xxcal(R)$ the set of subsets $X\subseteq\inssemisv(R)$ such that $X=X^\uparrow$. 

From now on, with a slight abuse of notation, given a $V\in\Zar(R)$, we denote by $d_V$ both the identity star operation on $V$ and the semistar operation (on $R$) $I\mapsto IV$, and by $v_V$ both the $v$-operation on $V$ and the semistar operation (on $R$) defined by $I\mapsto(IV)^{v_V}$.

There is a natural map $\pi:\inssemisv(R)\longrightarrow\Zar(R)$, $\star\mapsto R^\star$: by the results recalled at the end of Section \ref{sect:prel}, for any $V\in\Zar(R)$, the fiber $\pi^{-1}(V)$ contains exactly $d_V$ and $v_V$, and thus it is either a singleton (when the maximal ideal of $V$ is principal) or it is composed of two elements.

\begin{lemma}\label{lemma:principal}
Let $R$ be an integral domain, and let $V$ be a valuation overring of $R$ with maximal ideal $M$; let $d_V$ and $v_V$ as above. Let $\star$ be a semistar operation on $R$. Then:
\begin{enumerate}[(a)]
\item $V$ is $\star$-closed if and only if $\star\leq v_V$;
\item $M$ is $\star$-closed if and only if $\star\leq d_V$.
\end{enumerate}
\end{lemma}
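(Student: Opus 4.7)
My plan is to handle both parts with the same two-step template: prove the ``if'' direction by a short inclusion-chase, and reduce the ``only if'' direction to the dichotomy recalled in Section \ref{sect:prel}, namely that the only (semi)star operations on the valuation domain $V$ are $d_V$ and $v_V$. For the two ``if'' implications, the inputs are just $V^{v_V}=V$ and $MV=M$: if $\star\le v_V$ then $V\subseteq V^\star\subseteq V^{v_V}=V$, and if $\star\le d_V$ then $M\subseteq M^\star\subseteq MV=M$.

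For the converse of (a), I would use $V^\star=V$ to conclude that $\star|_{\inssubmod(V)}$ is a (semi)star operation on $V$, hence $\le v_V$ (the top element). For any $I\in\inssubmod(R)$, monotonicity applied to $I\subseteq IV$ then gives $I^\star\subseteq(IV)^\star=(IV)^{v_V}=I^{v_V}$, under the notational convention set up just before the lemma. For the converse of (b), the assumption $M^\star=M$ is a priori weaker than $V^\star=V$, so the first move is to bootstrap from one to the other via the identity $(M:M)=V$ (a standard valuation-theoretic fact: $V\subseteq(M:M)$ is immediate, and an $x\notin V$ would give $x^{-1}\in M$, whence $1=x\cdot x^{-1}\in xM$, so $xM\not\subseteq M$), combined with the property from Section \ref{sect:prel} that $(I:J)$ inherits $\star$-closure from $I$. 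With $V^\star=V$ in hand, the dichotomy forces $\star|_{\inssubmod(V)}$ to be $d_V$ or $v_V$, and I would rule out the second alternative whenever $d_V\ne v_V$ by noting that if $M$ is non-principal then $(V:M)=V$, so $M^{v_V}=(V:V)=V\ne M$, contradicting $M^\star=M$. Thus $\star|_{\inssubmod(V)}=d_V$, and $I^\star\subseteq(IV)^\star=IV=I^{d_V}$ for every $I$.

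The main obstacle I anticipate is less a hard calculation and more a careful notational juggle: $d_V$ and $v_V$ are being used both as star operations on $V$ and as semistar operations on $R$, and the argument needs $(IV)^\star$ (computed by $\star$ on $R$) to coincide with the value of the restricted operation on the $V$-submodule $IV$, which it does because they are literally the same function on $\inssubmod(V)$. The one genuine observation in the proof is the identity $(M:M)=V$, which is what converts the weaker hypothesis ``$M$ is $\star$-closed'' in (b) into the stronger ``$V$ is $\star$-closed'' needed to trigger the dichotomy on a valuation domain.
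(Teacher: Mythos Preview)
Your argument is correct and, for the ``if'' directions and for the bootstrap $(M:M)=V$ in (b), coincides with the paper's. The ``only if'' directions, however, take a genuinely different route: you restrict $\star$ to $\inssubmod(V)$, obtain a (semi)star operation on the valuation domain $V$, and then invoke the $\{d_V,v_V\}$ dichotomy from Section~\ref{sect:prel}, ruling out $v_V$ in case (b) via $M^\star=M$. The paper instead uses the general fact (cited from \cite{hhp_m-canonical}) that the largest semistar operation closing a fixed submodule $L$ is $I\mapsto(L:(L:I))$, and identifies this map directly as $v_V$ when $L=V$ and as $d_V$ when $L=M$. Your route is more self-contained against the preliminaries already in the paper and avoids the external reference; the paper's is more uniform (one template for both (a) and (b)) and sidesteps the case split on whether $M$ is principal.

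Two small points. In (a) you write $(IV)^\star=(IV)^{v_V}$, but from $\star|_{\inssubmod(V)}\le v_V$ one only obtains $\subseteq$; the conclusion is unaffected. In (b), the dichotomy in Section~\ref{sect:prel} is stated for \emph{star} operations, so strictly it gives $\star|_{\insfracid(V)}\in\{d,v\}$; to conclude $(IV)^\star=IV$ for \emph{every} $I\in\inssubmod(R)$ (including those with $IV$ not a fractional ideal of $V$) you should add a line, for instance that every nonzero proper $V$-submodule $J$ of $K$ equals $\bigcap_{x\notin J}xM$ and is therefore $\star$-closed once $M$ is.
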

\begin{proof}
If $\star\leq v_V$, then $V^\star\subseteq V^{v_V}=V$ and $V=V^\star$; similarly, if $\star\leq d_V$ then $M^\star\subseteq M^{d_V}=M$, so that $M=M^\star$.

Conversely, using the same proof of \cite[Lemma 3.1]{hhp_m-canonical} we see that, for any $L\in\inssubmod(R)$, the biggest semistar operation $\star$ such that $L$ is $\star$-closed is the map $I\mapsto(L:(L:I))$.

If $L=V$, then for any $I\in\inssubmod(R)$ we have
\begin{equation*}
(V:(V:I))=(V:(V:IV))=(IV)^{v_V};
\end{equation*}
hence $\star\leq v_V$. On the other hand, if $M$ is $\star$-closed then so is $(M:M)=V$; hence, $I\mapsto(M:(M:I))$ is a semistar operation that closes every $V$-submodule of the quotient field of $R$, and thus it must be $d_V$. Hence, $\star\leq d_V$.
\end{proof}

\begin{prop}\label{prop:xcal}
Let $R$ be a Pr\"ufer domain, and let $\Psi$ be the map
\begin{equation*}
\begin{aligned}
\Psi\colon\xxcal(R) & \longrightarrow \inssemistable(R)\\
X & \longmapsto \inf X.
\end{aligned}
\end{equation*}
Endow $\xxcal(R)$ with the reverse inclusion (i.e., $X\leq Y$ if $X\supseteq Y$). Then:
\begin{enumerate}[(a)]
\item\label{prop:xcal:wd} $\Psi$ is well-defined and order-preserving;
\item\label{prop:xcal:surj} if $\Min(I)$ is finite for every proper ideal $I$ of $R$, then $\Psi$ is surjective;
\item\label{prop:xcal:bij} if $\Spec(R)$ is Noetherian, then $\Psi$ is an order isomorphism.
\end{enumerate}
\end{prop}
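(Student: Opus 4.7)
For (a): each $\sharp \in \inssemisv(R)$ has $V := R^\sharp$ a valuation overring and is either $d_V$ or $v_V$; both are stable (as in the proof of Proposition \ref{prop:normstar}\ref{prop:normstar:st}), and infima of stable semistar operations are stable, so $\Psi$ lands in $\inssemistable(R)$. Order preservation is immediate: $X \leq Y$ means $X \supseteq Y$, whence $\inf X \leq \inf Y$ pointwise.

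The plan for (b) and (c) is to exhibit
\[\Phi(\star) := \{\sharp \in \inssemisv(R) : \sharp \geq \star\} \in \xxcal(R)\]
as a right inverse of $\Psi$ (always), and as a two-sided inverse under the hypothesis of (c). For (b), Corollary \ref{cor:MinIfin-glob} gives $\star = \norm{\star}$, and Proposition \ref{prop:leq} shows that $\Phi(\star)$ contains
\[X_0 := \{d_{R_P} : P \in \qspec{\star}(R)\} \cup \{v_{R_P} : P \in \psspec{\star}(R)\},\]
whose infimum is $\norm{\star} = \star$ by the very definition of $\norm{\star}$. Hence $\star \leq \inf \Phi(\star) \leq \inf X_0 = \star$, so $\Psi \circ \Phi = \mathrm{id}$, proving surjectivity.

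For (c), it remains to prove $\Phi \circ \Psi = \mathrm{id}$, i.e., $\Phi(\inf X) = X$ for every $X \in \xxcal(R)$; the inclusion $\subseteq$ is trivial. For the reverse I would first establish the characterizations
\[d_{R_P} \geq \star \iff P \in \qspec{\star}(R), \qquad v_{R_P} \geq \star \iff P \in \qspec{\star}(R) \cup \psspec{\star}(R),\]
by direct computation (using $PR_P \cap R = P$ for the first, and analyzing $(LR_P)^{v_{R_P}}$ on primary ideals for the second). It then suffices to show that $d_{R_P} \in X$ whenever $P \in \qspec{\star}(R)$, with $\star := \inf X$ (the $\psspec$ case being analogous). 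Applying Lemma \ref{lemma:spec} to $\sharp|_{\inssubmod(R^\sharp)}$ for each $\sharp \in \inssemisv(R)$ gives $P^\sharp \cap R \in \{P, R\}$, so the identity $P = P^\star \cap R = \bigcap_{\sharp \in X}(P^\sharp \cap R)$ forces some $\sharp_0 \in X$ with $R^{\sharp_0} = R_Q$ for some $Q \supseteq P$. When $\sharp_0 = d_{R_Q}$, the inequality $d_{R_Q} \leq d_{R_P}$ together with upward closure of $X$ yields $d_{R_P} \in X$.

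\textbf{Main obstacle.} The delicate point is what to do when the witness forced above is of the form $\sharp_0 = v_{R_Q}$ with $Q \supsetneq P$, since $v_{R_Q} \not\leq d_{R_P}$ in general. The Noetherian spectrum hypothesis enters here: since $P \in \qspec{\star}$ implies every $P$-primary ideal is quasi-$\star$-closed (by Lemma \ref{lemma:zarclosed} and \ref{lemma:prim}\ref{lemma:prim:pruf}), one can iterate the argument over a cofinal family of such primary ideals; Noetherian finiteness of the chain of primes above $P$ then guarantees that for a suitable primary $L \subsetneq P$ the identity $L = \bigcap_{\sharp \in X}(L^\sharp \cap R)$ must be witnessed by a $\sharp_0$ of $d$-type $d_{R_Q}$ with $Q \supseteq P$, allowing one to conclude $d_{R_P} \in X$ via upward closure.
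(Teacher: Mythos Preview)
Your arguments for (a) and (b) are correct and essentially the paper's.

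For (c), however, you have mislocated the difficulty. Your ``main obstacle''---the case $\sharp_0=v_{R_Q}$ with $Q\supsetneq P$---is not an obstacle at all: in fact $v_{R_Q}\leq d_{R_P}$ whenever $P\subsetneq Q$. By Lemma~\ref{lemma:principal} this is equivalent to $PR_P$ being $v_{R_Q}$-closed; but $PR_P$ equals $PR_Q$ (a prime of a valuation domain coincides with its extension to the localization), and $PR_Q$ is a non-maximal prime of $R_Q$, hence divisorial. So your $d$-case actually goes through \emph{without} any Noetherian hypothesis, and the proposed ``iteration over primary ideals using finiteness of chains above $P$'' is both unnecessary and not clearly well-defined.

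The genuine role of Noetherianity lies in the $v$-case, which you dismiss as ``analogous''. It is not. To show $v_{R_P}\in X$ when $v_{R_P}\geq\inf X$, one has $(R_P)^{\inf X}=R_P$, so $\bigcap_{\sharp\in X}(R_P)^\sharp=R_P$; each $(R_P)^\sharp$ is an overring of $R_P$, and to conclude that some $(R_P)^{\sharp_0}=R_P$ (whence $\sharp_0\leq v_{R_P}$ by Lemma~\ref{lemma:principal}) one needs $R_P$ to have a \emph{smallest} proper overring, equivalently $P$ to be branched. This is exactly what the Noetherian hypothesis supplies, and without it the conclusion fails (see the remark immediately following the proposition). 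Your characterization $v_{R_P}\geq\star\iff P\in\qspec{\star}(R)\cup\psspec{\star}(R)$ is likewise only valid when $P$ is branched (Lemma~\ref{lemma:zarclosed}\ref{lemma:zarclosed:b}).

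The paper's proof avoids the detour through $\qspec{\star}$ and $\psspec{\star}$ entirely: for $\star\in X\setminus Y$ with $V=R^\star$, it tests the maximal ideal $M$ (when $\star=d_V$) or $V$ itself (when $\star=v_V$) against $\inf Y$, and uses Lemma~\ref{lemma:principal} directly to produce $\flat\in Y$ with $\flat\leq\star$. This is cleaner and makes the branching requirement in the $v$-case transparent.
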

\begin{proof}
\ref{prop:xcal:wd} With the same proof of Proposition \ref{prop:normstar}\ref{prop:normstar:st}, and since every $V\in\Zar(R)$ is in the form $R_P$ for some $P\in\Spec(R)$, we see that every $\star\in\xxcal(R)$ is stable, and thus $\Psi$ is well-defined. Moreover, if $X\supseteq Y$ then clearly $\inf X\leq\inf Y$, and thus $\Psi$ is order-preserving.

\ref{prop:xcal:surj} Given a stable semistar operation $\star$, let $X:=\{d_{R_P}\mid P\in\qspec{\star}(R)\}\cup\{v_{R_P}\mid P\in\psspec{\star}(R)\}$. By definition, $\inf X=\norm{\star}$; by Corollary \ref{cor:MinIfin-glob}, $\norm{\star}=\star$. Moreover, $\inf X=\inf X^\uparrow$, and thus $\star=\Psi(X^\uparrow)$.

\ref{prop:xcal:bij} We show that, for every $X,Y\in\xxcal(R)$, $X\neq Y$, if $\inf X\geq \inf Y$ then $X\subseteq Y$. Suppose not: then, there is a $\star\in X\setminus Y$. Let $V:=R^\star$, and let $M$ be the maximal ideal of $V$.

The map $\star|_{\insfracid(V)}$ is a star operation on $V$, and thus $\star$ is either equal to $d_V$ or to $v_V$. In the former case, $M^\star=M$, and so $M^{\inf X}=M$; moreover, for any semistar operation $\sharp$, $M^\sharp$ is a $V$-module, and thus if $M^\sharp\neq M$ then $V\subseteq M^\sharp$. Therefore, since $\inf X\geq\inf Y$, there must be a $\flat\in Y$ such that $M=M^\flat$. By Lemma \ref{lemma:principal}, $\flat\leq d_V=\star$. This contradicts the hypotheses $\star\notin Y$ and $Y=Y^\uparrow$, and thus this case is impossible.

Suppose $\star=v_V$. Since $\Spec(R)$ is Noetherian, $M$ is branched; in particular, $V$ has a smallest proper overring, say $W$. As in the previous case, $V^\star=V^{\inf X}=V$, and $V^\sharp$ is an overring of $V$ for every semistar operation $\sharp$ on $R$. Hence, if $V^\sharp\neq V$ then $W\subseteq V^\sharp$; thus, $V^\flat=V$ for some $\flat\in Y$, and again by Lemma \ref{lemma:principal} $\flat\leq\star$. This contradicts $\star\notin Y=Y^\uparrow$.

Hence, $\inf X\geq \inf Y$ implies $X\subseteq Y$. In particular, if $\inf X=\inf Y$ then $X\subseteq Y$ and $Y\subseteq X$, and thus $\Psi$ is injective. Since, by the previous point, $\Psi$ is surjective, it is bijective. Moreover, this also shows that the inverse of $\Psi$ is order-preserving; hence, $\Psi$ is an order isomorphism.
\end{proof}

Note that the Noetherian hypothesis in part \ref{prop:xcal:bij} is necessary: for example, consider a valuation domain $V$ whose maximal ideal is branched. If $X:=\inssemistable(V)\setminus\{d_V,v_V\}$, and $Y:=\inssemistable(V)\setminus\{d_V\}$, then $\inf X=\inf Y=v_V$, but $X\neq Y$.

\begin{prop}\label{prop:noeth-max}
Let $R$ be a Pr\"ufer domain with Noetherian spectrum. Let $\inssmstable(R)$ be the set of stable (semi)star operations on $R$ and let $\mathcal{M}$ be the set of nondivisorial maximal ideals of $R$.
\begin{enumerate}[(a)]
\item\label{prop:noeth-max:max} For any $\star\in\inssmstable(R)$, there are disjoint subsets $\Delta_1,\Delta_2\subseteq\Max(R)$ such that $\Max(R)=\Delta_1\cup\Delta_2$ and such that
\begin{equation*}
I^\star=\bigcap_{P\in\Delta_1}IR_P\cap\bigcap_{P\in\Delta_2}(IR_P)^{v_{R_P}}
\end{equation*}
for every $I\in\inssubmod(R)$.
\item\label{prop:noeth-max:M} Endow the power set $\power(\mathcal{M})$ with the containment order. The map
\begin{equation*}
\begin{aligned}
\Phi\colon\inssmstable(R) & \longrightarrow \power(\mathcal{M})\\
\star & \longmapsto \psspec{\star}(R)
\end{aligned}
\end{equation*}
is an order isomorphism.
\item\label{prop:noeth-max:num} $|\inssmstable(R)|=2^{|\mathcal{M}|}$.
\end{enumerate}
\end{prop}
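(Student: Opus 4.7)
By Corollary \ref{cor:noethspec}, $\star = \norm{\star}$, so I begin from
\[
I^\star = \bigcap_{P \in \qspec{\star}(R)} IR_P \cap \bigcap_{P \in \psspec{\star}(R)} (IR_P)^{v_{R_P}}
\]
and aim to reduce the indexing to $\Max(R)$. The pivotal observation is that, since $\star$ is a (semi)star operation and $R_P^{v_{R_P}}=R_P$, applying the formula to $R$ itself yields $R = R^\star = \bigcap_{P \in \qspec{\star}(R) \cup \psspec{\star}(R)} R_P$. In a Pr\"ufer domain, an equality $\bigcap_{P \in S} R_P = R$ forces $S \supseteq \Max(R)$: an inclusion $R_P \subseteq R_M$ with $M$ maximal requires $P = M$, so otherwise a missing maximal would leave some element outside $R$ in the intersection. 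Hence every maximal ideal of $R$ lies in $\qspec{\star}(R) \cup \psspec{\star}(R)$.

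Combined with Proposition \ref{prop:qspec-psspec}, this yields $\psspec{\star}(R) \subseteq \Max(R)$: a non-maximal $P \in \psspec{\star}(R)$ would leave any maximal $M \supsetneq P$ in neither set (downward closure of $\qspec{\star}(R)$ rules out $M \in \qspec{\star}(R)$, incomparability rules out $M \in \psspec{\star}(R)$), contradicting the covering condition. Lemma \ref{lemma:princ-ps} then gives $PR_P$ non-principal, equivalent to $P$ non-divisorial, so $\psspec{\star}(R) \subseteq \mathcal{M}$. Setting $\Delta_2 := \psspec{\star}(R)$ and $\Delta_1 := \Max(R) \setminus \Delta_2$ partitions $\Max(R)$, and the non-maximal $\qspec$-contributions in the formula become redundant: for $P \in \qspec{\star}(R)$ non-maximal and any maximal $M \supsetneq P$, if $M \in \Delta_1$ then $IR_M \subseteq IR_P$ directly (since $R_M \subseteq R_P$), while if $M \in \Delta_2$ then $MR_P = R_P$ and the description of non-divisorial ideals in a valuation domain from Section \ref{sect:prel} gives $(IR_M)^{v_{R_M}} \subseteq IR_P$. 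Part (a) follows.

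For part (b), the inverse of $\Phi$ sends $\Delta \subseteq \mathcal{M}$ to the operation $\star_\Delta$ defined by the part-(a) formula with $\Delta_2 = \Delta$; stability follows from Proposition \ref{prop:normstar}\ref{prop:normstar:st}, and $R^{\star_\Delta} = R$ since every maximal contributes an $R_M$-factor. To check $\psspec{\star_\Delta}(R) = \Delta$: for $M \in \Delta$, non-principality of $MR_M$ yields a proper divisorial $MR_M$-primary ideal of $R_M$ whose contraction is $\star_\Delta$-closed (via Lemma \ref{lemma:prim}), while $M^{\star_\Delta} = R$ because $(MR_M)^{v_{R_M}} = R_M$; for $M \in \Delta_1$ or any non-maximal prime $P$, a direct computation from the formula gives $1 \notin P^{\star_\Delta}$ (some factor $PR_M$ or $(PR_M)^{v_{R_M}}$ remains proper), ruling these out of $\psspec{\star_\Delta}(R)$. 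Injectivity of $\Phi$ follows from part (a), and order preservation holds because $\Delta \subseteq \Delta'$ amounts to replacing some $d_{R_M}$ by the larger $v_{R_M}$. Part (c) then reads $|\inssmstable(R)| = |\power(\mathcal{M})| = 2^{|\mathcal{M}|}$.

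The main obstacle, I expect, is the pivotal observation and its consequence $\psspec{\star}(R) \subseteq \mathcal{M}$: once the (semi)star hypothesis $R = R^\star$ is translated into the covering condition $\qspec{\star}(R) \cup \psspec{\star}(R) \supseteq \Max(R)$, everything else reduces to bookkeeping that combines Proposition \ref{prop:qspec-psspec}, Lemma \ref{lemma:princ-ps}, and the preliminaries on $v$-closures in valuation domains.
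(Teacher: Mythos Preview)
Your overall strategy matches the paper's, but there is a genuine gap in the step you flag as pivotal. The assertion that in a Pr\"ufer domain $\bigcap_{P\in S}R_P=R$ forces $S\supseteq\Max(R)$ is \emph{false} without the Noetherian-spectrum hypothesis: the ring of algebraic integers (Example~\ref{ex:AD}) is a one-dimensional Pr\"ufer domain in which any single maximal ideal can be omitted from the intersection while still recovering $R$. Your justification (``an inclusion $R_P\subseteq R_M$ with $M$ maximal requires $P=M$, so a missing maximal would leave some element outside $R$'') does not work: knowing that no individual $R_P$ is contained in $R_M$ does not produce one element lying in \emph{all} $R_P$ yet outside $R_M$. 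The correct argument uses Noetherianity: $M$ is the radical of a finitely generated, hence invertible, ideal $J$; then $J^{-1}\subseteq R_N$ for every maximal $N\neq M$ but $J^{-1}\not\subseteq R_M$, so $M$ cannot be dropped. The paper invokes \cite[Theorem~4.2.34]{fontana_libro} for exactly this step.

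A second, smaller gap is the asserted equivalence ``$PR_P$ non-principal $\Leftrightarrow$ $P$ non-divisorial'' for maximal $P$. In part~(a) you only need the implication non-principal $\Rightarrow$ non-divisorial, which does hold in any Pr\"ufer domain (and can be bypassed entirely: $P\in\psspec{\star}(R)$ already gives $P^\star=R$, and since $\star\leq v$ this forces $P^v=R$). But in part~(b) you need the converse, and that genuinely requires the Noetherian hypothesis: if $MR_M=pR_M$, choose a finitely generated $J$ with $\rad(J)=M$; then $(J,p)$ is finitely generated, $M$-primary, and extends to $MR_M$, hence equals $M$, making $M$ invertible and divisorial. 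The paper carries out precisely this argument, which you omit.
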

\begin{proof}
\ref{prop:noeth-max:max} We set $\Delta_1:=\qspec{\star}(R)\cap\Max(R)$ and $\Delta_2:=\psspec{\star}(R)$, and proceed to show that they fulfill our claim. We first claim that $\Delta_2=\Max(R)\setminus\Delta_1$. Indeed, by Corollary \ref{cor:noethspec}, we have
\begin{equation*}
R=R^\star=\bigcap_{P\in\qspec{\star}(R)}R_P\cap\bigcap_{P\in\psspec{\star}(R)}R_P= \bigcap_{P\in\qspec{\star}(R)\cup\psspec{\star}(R)}R_P.
\end{equation*}
However, since $\Spec(R)$ is Noetherian, $\qspec{\star}(R)\cup\psspec{\star}(R)$ must contain $\Max(R)$ (this is implied by \cite[Theorem 4.2.34]{fontana_libro}); since they are disjoint, the claim follows.

Hence, we can write, for every $I\in\inssubmod(R)$,
\begin{equation*}
I^\star=\bigcap_{P\in\Delta_1}IR_P\cap\bigcap_{P\in\Delta_2}(IR_P)^{v_{R_P}}\cap\bigcap_{P\in\Spec(R)\setminus\Max(R)}IR_P.
\end{equation*}
Since $\Spec(R)$ is Noetherian, every nonmaximal prime $P$ is divisorial \cite[Corollary 4.1.11]{fontana_libro} and thus (being $R=R^\star$) in $\qspec{\star}(R)$; moreover, if $P\subseteq M$, then $IR_P\subseteq IR_M$, so that the third big intersection of the above formula can be thrown out. The claim is proved.

\ref{prop:noeth-max:M} Let $\star\in\inssmstable(R)$: by the previous point, $\psspec{\star}(R)\subseteq\Max(R)$. Moreover, since $\star$ is a (semi)star operation, every divisorial ideal must be $\star$-closed, and in particular cannot belong to the pseudo-spectrum; hence, $\Phi$ is well-defined. Note also that, if $\star_1\leq\star_2$, then $\qspec{\star_1}(R)\supseteq\qspec{\star_2}(R)$; by the previous point, it follows that $\psspec{\star_1}(R)\subseteq\psspec{\star_2}(R)$. Therefore, $\Phi$ is order-preserving.

Let now $\Lambda\subseteq\mathcal{M}$. Define a semistar operation $\star_\Lambda$ by
\begin{equation*}
I\mapsto\bigcap_{P\in\Max(R)\setminus\Lambda}IR_P\cap\bigcap_{P\in\Lambda}(IR_P)^{v_{R_P}}.
\end{equation*}
Then, each $\star_\Lambda$ is a stable (semi)star operation on $R$, and by the previous part of the proof every stable (semi)star operation must be equal to $\star_\Lambda$ for some $\Lambda$; thus, the assignment $\Lambda\mapsto\star_\Lambda$ defines a surjective map $\Phi_0$ from $\power(\mathcal{M})$ to $\inssmstable(R)$. Moreover, $IR_P\subseteq(IR_P)^{v_{R_P}}$ for every $I$ and every prime $P$, and thus if $\Lambda_1\subseteq\Lambda_2$ then $\star_{\Lambda_1}\leq\star_{\Lambda_2}$. Hence, $\Phi_0$ is order-preserving.

To show that it is the inverse of $\Phi$, it is enough to show that $\psspec{\star_\Lambda}(R)=\Lambda$ for every $\Lambda\subseteq\mathcal{M}$. Clearly, if $P\in\Max(R)\setminus\Lambda$ then $P\in\qspec{\star_\Lambda}(R)$. On the other hand, let $P\in\Lambda$; since $\Lambda\subseteq\mathcal{M}$, the ideal $P$ is not divisorial over $R$, and we claim that $PR_P$ is not divisorial over $R_P$. If it is divisorial, then $PR_P=pR_P$ for some $p\in P$; moreover, since $\Spec(R)$ is Noetherian, there is a finitely generated ideal $I$ such that $V(I)=\{P\}$. If $J=(I,p)$, then $J$ is a $P$-primary ideal such that $JR_P=PR_P$; hence, $J=P$ is invertible, and $P$ is divisorial, a contradiction.

By definition, $P^{\star_\Lambda}=(PR_P)^{v_{R_P}}\cap R$. However, by the previous reasoning $(PR_P)^{v_{R_P}}=R_P$, and thus $P^{\star_\Lambda}=R$; by the previous point, it must be $P\in\psspec{\star_\Lambda}(R)$. Hence, $\Phi_0$ is the inverse of $\Phi$, and both are order isomorphisms.

\ref{prop:noeth-max:num} is an immediate consequence of \ref{prop:noeth-max:M}.
\end{proof}

\begin{oss}
\begin{enumerate}
\item In the previous proposition, ``(semi)star operation'' can be substituted with ``star operation'' without problems, since there is a one-to-one correspondence between stable star and stable (semi)star operations.

\item If we focus on semistar operations, the natural extension of the first part of the previous proposition would be to ask for the existence of $\Delta_1,\Delta_2\subseteq\Spec(R)$. However, this is essentially Corollary \ref{cor:MinIfin-glob}.

\item Since each stable semistar operation $\star$ on $R$ can be seen as a stable (semi)star operation on $R^\star$, the previous proposition shows that $\inssemistable(R)$ is the disjoint union of a family of sets order-isomorphic to power sets (namely, the $\inssmstable(D)$, as $D$ ranges among the overrings of $R$). It is not clear if it is possible to obtain a good description of the whole set $\inssemistable(R)$ from this point of view.
\end{enumerate}
\end{oss}

The following is a small extension of Corollary \ref{cor:noethspec}, allowing the possibility of a ``small'' deviation from the Noetherianity of $\Spec(R)$.
\begin{prop}
Let $R$ be a Pr\"ufer domain, and let $\star$ be a stable (semi)star operation on $R$. If $\Spec(R)\setminus\qspec{\star}(R)$ is a Noetherian space, then $\star=\norm{\star}$.
\end{prop}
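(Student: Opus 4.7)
The plan is to reduce the statement to the already-proved Theorem \ref{teor:MinIfin}, using the criterion recalled in Section \ref{sect:prel} that a stable semistar operation is determined by the set of proper ideals $I$ with $1\in I^\star$. Since $\star\leq\norm{\star}$ by Proposition \ref{prop:normstar}\ref{prop:normstar:leq}, and since both are (semi)star operations on $R$, it suffices to show that for every proper ideal $I$ of $R$, the condition $1\in I^{\norm{\star}}$ implies $1\in I^\star$.

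So fix a proper $I$ with $1\in I^{\norm{\star}}$. The first step is to observe that no $P\in\qspec{\star}(R)$ can contain $I$: indeed, if $I\subseteq P$ with $P\in\qspec{\star}(R)$, then $IR_P\subseteq PR_P\subsetneq R_P$, so $1\notin IR_P$, contradicting the definition of $I^{\norm{\star}}$. Consequently $V(I)$ is disjoint from $\qspec{\star}(R)$, that is,
\begin{equation*}
V(I)\subseteq\Spec(R)\setminus\qspec{\star}(R).
\end{equation*}
By hypothesis the right-hand side is Noetherian as a topological space; a subspace of a Noetherian space is Noetherian, and thus $V(I)$ is Noetherian in the subspace topology.

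The next step is to deduce that $\Min(I)$ is finite. A Noetherian topological space has only finitely many irreducible components; and the irreducible components of $V(I)$ (with the Zariski subspace topology) are precisely the closed sets $V(P)$ with $P\in\Min(I)$, since a closed irreducible subset of $V(I)$ has the form $V(P)$ with $P\supseteq I$, and the maximal such ones correspond to the minimal primes of $I$. Therefore $\Min(I)$ is finite.

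Once $\Min(I)$ is known to be finite, Theorem \ref{teor:MinIfin} gives $1\in I^\star$, completing the argument. The only substantive point is the opening reduction identifying $V(I)$ as a subspace of $\Spec(R)\setminus\qspec{\star}(R)$; everything else is a formal consequence of previously established material. No new technical obstacle arises, since the Noetherian hypothesis is used exactly where it is needed, namely to force each relevant $V(I)$ to have only finitely many minimal primes and thereby fall within the scope of Theorem \ref{teor:MinIfin}.
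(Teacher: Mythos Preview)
Your proof is correct and follows essentially the same route as the paper's: observe that $1\in I^{\norm{\star}}$ forces $V(I)\subseteq\Spec(R)\setminus\qspec{\star}(R)$, use the Noetherian hypothesis to conclude that $\Min(I)$ is finite, and then invoke Theorem~\ref{teor:MinIfin}. You simply spell out in more detail the passage from Noetherianity of $V(I)$ to finiteness of $\Min(I)$ via irreducible components, whereas the paper states this step directly.
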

\begin{proof}
Again, we have to prove that $1\in I^\star$ if and only if $1\in I^{\norm{\star}}$, and one implication follows from Theorem \ref{teor:bigleq}. Suppose $1\in I^{\norm{\star}}$; then, $I$ is not contained in any $P\in\qspec{\star}(R)$, and thus all the minimal prime ideals of $I$ are in $\Spec(R)\setminus\qspec{\star}(R)$. Since the latter space is Noetherian, $\Min(I)$ is finite, and thus we can apply Theorem \ref{teor:MinIfin}.
\end{proof}

Recall that a semistar operation is \emph{spectral} if is in the form $I\mapsto\bigcap\{IR_P\mid P\in\Delta\}$ for some $\Delta\subseteq\Spec(R)$.
\begin{cor}
Let $R$ be a Pr\"ufer domain with Noetherian spectrum such that $PR_P$ is principal for every $P\in\Spec(R)$. Then, every stable semistar operation is spectral.
\end{cor}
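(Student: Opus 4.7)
The plan is to invoke Corollary \ref{cor:noethspec} to rewrite $\star$ in the standard form $\norm{\star}$, and then to show that the principality hypothesis forces the pseudo-spectrum to be empty. Indeed, Corollary \ref{cor:noethspec} immediately gives
\begin{equation*}
I^\star=\bigcap_{P\in\qspec{\star}(R)}IR_P\cap\bigcap_{P\in\psspec{\star}(R)}(IR_P)^{v_{R_P}}
\end{equation*}
for every $I\in\inssubmod(R)$, so once we know that $\psspec{\star}(R)=\emptyset$ the right-hand side becomes a pure intersection of localizations, which is exactly the form of a spectral semistar operation.

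To prove the emptiness, the main tool is Lemma \ref{lemma:princ-ps}. Since that lemma is phrased for (semi)star operations, I would first pass to the overring $D:=R^\star$ and the induced stable (semi)star operation $\sharp:=\star|_{\inssubmod(D)}$, which lives on a Pr\"ufer domain. Suppose, for a contradiction, that $P\in\psspec{\star}(R)$. By Lemma \ref{lemma:zarclosed}\ref{lemma:zarclosed:a} the ring $R_P$ is $\star$-closed, and so contains $R^\star=D$; for an overring $D$ of a Pr\"ufer domain, the containment $D\subseteq R_P$ is equivalent to $PD\neq D$ (otherwise $1\in PD\subseteq PR_P$, absurd). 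Then Lemma \ref{lemma:est-spettri} yields $PD\in\psspec{\sharp}(D)$, while the identification $D_{PD}=R_P$ together with our hypothesis shows that $(PD)D_{PD}=PR_P$ is principal. Applying Lemma \ref{lemma:princ-ps} to $(D,\sharp)$ then forces $PD\notin\psspec{\sharp}(D)$, the sought contradiction.

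The only mild obstacle is precisely this semistar-to-(semi)star reduction, which is needed because the core lemmas about the pseudo-spectrum are stated for operations that close the base ring. Once the passage to $D=R^\star$ is in place, the argument is a direct concatenation of already-proven results, and no new estimates or case analyses beyond the ones above are required.
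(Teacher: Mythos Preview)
Your argument is exactly the paper's: apply Corollary \ref{cor:noethspec} to write $\star=\norm{\star}$, then kill the pseudo-spectrum via Lemma \ref{lemma:princ-ps} so that only the spectral part of $\norm{\star}$ survives. The paper simply quotes that lemma without further comment, so your explicit passage to $D=R^\star$ and $\sharp=\star|_{\inssubmod(D)}$ is extra care the paper omits.

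One small nitpick: your appeal to Lemma \ref{lemma:zarclosed}\ref{lemma:zarclosed:a} to obtain $D\subseteq R_P$ carries the very same (semi)star hypothesis you are trying to work around. This is easy to bypass directly: from $L^\star\cap R=L\neq R$ you get $1\notin L^\star\supseteq LD$, so $LD$ is a proper ideal of $D$ and lies in some maximal ideal of $D$, whose contraction to $R$ contains $\rad(L)=P$; hence $PD\neq D$, and Lemma \ref{lemma:est-spettri} applies as you intend.
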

\begin{proof}
By Corollary \ref{cor:noethspec}, every stable semistar operation is equal to its normalized stable version. By Lemma \ref{lemma:princ-ps}, $\psspec{\star}(R)=\emptyset$; hence, $\star$ can be written as $I\mapsto\bigcap\{IR_P\mid P\in\qspec{\star}(R)\}$, and thus $\star$ is spectral.
\end{proof}

\section*{Acknowledgements}
The author wishes to thank the referee for his/her suggestions, which greatly improved the paper.

\end{document}